
\documentclass[12pt,a4paper]{amsart}

\usepackage{amsmath, nicefrac, amsthm, verbatim, amsfonts, amssymb, xcolor, bbm}
\usepackage{graphics, xspace, enumerate}
\usepackage[a4paper,margin=3cm]{geometry}
\RequirePackage[mathscr]{eucal}
\usepackage[bb=boondox]{mathalfa}

\usepackage{graphicx}
\usepackage[colorlinks=true,citecolor=green,urlcolor=green,linkcolor=green,bookmarksopen=true,unicode=true,pdffitwindow=true]{hyperref}
\usepackage[english]{babel}
\usepackage[languagenames,fixlanguage]{babelbib}
\usepackage{comment}
\hypersetup{pdfauthor={}}
\hypersetup{pdftitle={Capacity of the range of random walks on groups}}

\makeatletter
\@namedef{subjclassname@2020}{\textup{2020} Mathematics Subject Classification}
\makeatother

\hyphenation{Austau-schdienst}

\theoremstyle{plain}
\newtheorem{theorem}{Theorem}[section]

\newtheorem{proposition}[theorem]{Proposition}
\theoremstyle{definition}
\newtheorem{remark}[theorem]{Remark}

\DeclareMathOperator*{\E}{\mathbf{E}}
\newcommand {\Prob} {\ensuremath{\mathbf{P}}}
\newcommand {\R} {\ensuremath{\mathbf{R}}}
\newcommand {\ZZ} {\ensuremath{\mathbf{Z}}}
\newcommand {\N} {\ensuremath{\mathbf{N}}}

\newcommand {\GG} {\ensuremath{\mathsf{G}}}

\newcommand{\Capa}{\operatorname{Cap}}
\newcommand{\Var}{\operatorname{Var}}

\newcommand{\bbN}{\mathbb{N}}

\newcommand{\bbR}{\mathbb{R}}

\newcommand{\bbjedan}{\mathbbm{1}}

\newcommand{\calC}{\mathcal{C}}

\newcommand{\calD}{\mathcal{D}}

\newcommand{\calR}{\mathcal{R}}
\newcommand{\calN}{\mathcal{N}}

\newcommand{\aps}[1]{\vert #1 \vert}

\newcommand{\floor}[1]{\lfloor #1 \rfloor}

\newcommand{\obl}[1]{\bigg( #1 \bigg)}

\newcommand{\ugl}[1]{\bigg[ #1 \bigg]}
\newcommand{\VIT}[1]{\left\{ #1 \right\}}

\numberwithin{equation}{section}


\title[Capacity of the range of random walks on groups]{Capacity of the range of random walks on groups}

\author[R.\ Mrazovi\'{c}]{Rudi Mrazovi\'{c}}
\address[Rudi\ Mrazovi\'{c}]{Department of Mathematics\\University of Zagreb\\ Zagreb\\Croatia}
\email{Rudi.Mrazovic@math.hr}

\author[N.\ Sandri\'{c}]{Nikola Sandri\'{c}}
\address[Nikola\ Sandri\'{c}]{Department of Mathematics\\University of Zagreb\\ Zagreb\\Croatia}
\email{nsandric@math.hr}

\author[S.\ \v{S}ebek]{Stjepan\ \v{S}ebek}
\address[Stjepan\ \v{S}ebek]{Department of Applied Mathematics\\
	Faculty of Electrical Engineering and Computing\\
	University of Zagreb\\ 
 Zagreb\\ 
	Croatia}
\email{stjepan.sebek@fer.hr}

\subjclass[2020]{60G50, 60F05, 05C81}
\keywords{capacity, central limit theorem, strong law of large numbers, the range of a random walk}

\begin{document}
\allowdisplaybreaks[4]

\begin{abstract}
	In this paper, we discuss asymptotic behavior  of the capacity of the range of symmetric random walks on finitely generated groups. We show the corresponding strong law of  large numbers and central limit theorem.
\end{abstract}

\maketitle

%
%
%
%

\section{Introduction}

Let $\GG$ be a  group with  a finite symmetric set of generators $\Gamma$ (i.e.\ $\Gamma=\Gamma^{-1}$).  We define the symmetric random walk $\{S_n\}_{n\geq 0}$ on $\GG$ with respect to $\Gamma$ by taking a sequence $\{X_n\}_{n\in\N}$ of independent random elements, uniformly distributed on $\Gamma$, and defining $S_0=g$ for some fixed $g\in\GG$ and $S_{n+1}=S_nX_{n+1}$. We denote the associated probability by $\Prob_g$. Usually, we will take $g=e$ (identity) and in this case we will suppress the index and write $\Prob$. For example, taking $\GG=\ZZ^d$ and its standard set of generators $\Gamma=\{e_1,-e_1,\dots,e_d,-e_d\}$ (where $\{e_1,\dots,e_d\}$ is the standard basis of $\R^d$) gives the usual symmetric simple random walk on $\ZZ^d$. We will mention some other examples later on (we also refer the reader to \cite[Section~3.4]{peres-lyons}).

Although the distribution of the random walk $\{S_n\}_{n\geq 0}$ depends on the choice of $\Gamma$, most of its macroscopic properties are independent of the generating set one decides to work with. For example, in \cite{Varopoulos-rec-tran} it has been shown that $\{S_n\}_{n\geq 0}$ is recurrent if and only if $\GG$ contains a subgroup of finite index isomorphic to trivial group, $\ZZ$ or $\ZZ^2$ (as usual, $\{S_n\}_{n\ge0}$ is recurrent if $\sum_{n\ge0}\Prob_{e}(S_n=e)=\infty$; otherwise it is transient).

The main aim of this paper is to establish  strong law of large numbers (SLLN) and central limit theorem (CLT) for the capacity of the range process of  $\{S_n\}_{n\ge0}$.
Recall that the range process $\{\calR_n\}_{n\ge0}$ is defined as the sequence of random sets
  $$\calR_n = \{S_0,\dots,S_n\}.$$
For $1\le m\le n$ we use the notation $\calR[m,n]=\{S_m,\dots,S_n\}$,  $\calR[n,\infty)=$\linebreak $\{S_n,S_{n+1},\dots\}$ and $\calR_\infty=\{S_0,S_{1},\dots\}$.
The capacity of a set $A\subseteq\GG$ (with respect to the random walk $\{S_n\}_{n\ge0}$) is defined as 
\begin{align*}
{\rm Cap}(A)=\sum_{g\in A}\Prob_g(\tau^+_A=\infty).
\end{align*}
Here, $\tau^+_A$ denotes the first return time of $\{S_n\}_{n\ge0}$ to the set $A$, i.e.
\begin{align*}
\tau^+_A=\inf\{n\ge1:S_n\in A\}.
\end{align*} 
We are interested in the asymptotic behavior of the process  $\{\calC_n\}_{n\ge0}$ defined as 
  $$\calC_n={\rm Cap}(\calR_n ).$$

Before stating our main results, we first remind the reader of the notion of group growth rate.
For $g\in\GG$, define its word length $\rho(g)$ as
  $$\rho(g) = \inf \{m\ge0: g=\gamma_1\dots\gamma_m, \text{for }\gamma_1,\dots,\gamma_m\in \Gamma\}.$$
In other words, $\rho(g)$ is the minimal number of ``letters'' from ``alphabet'' $\Gamma$ needed to  express $g$.
The growth function $n\mapsto \mathcal{V}(n)$ of $\GG$ is defined as (as usual, $|S|$ denotes the size of set $S$)
  $$\mathcal{V}(n)= |\{g\in\GG:\rho(g)\le n\}|.$$
Although the growth function depends on the choice of $\Gamma$, its order of magnitude is independent of that choice. Of special interest are groups of polynomial growth, i.e.\ groups for which $\mathcal{V}(n)\asymp n^d$ for some integer growth index $d\ge0$. Groups for which $\mathcal{V}(n)\gtrsim n^d$ for all $d\geq 0$ are said to have superpolynomial growth and they have infinite growth index ($d=\infty$) by definition.\footnote{Here we use the standard asymptotic notation -- see the end of this section.} One can show that a finitely generated group has either polynomial or superpolynomial growth.\footnote{The content of this statement is in discarding the existence of groups of growth e.g. $n^{5/2}$ or $n^2\log n$.} Unsurprisingly, the growth index of $\ZZ^d$ is equal to $d$.  The celebrated theorems of Bass and Gromov show that a finitely generated group is of polynomial growth if and only if it is virtually nilpotent, see \cite[Theorem 2]{Bass-1972} and \cite[Main theorem]{Gromov-1981} (see also \cite[Theorems VI.2.1 and VI.2.2]{Varopoulos-Saloff-Coste-Coulhon}).


We now state the main results of the paper. Here, and in the rest of the paper, $d$ will always denote the growth index of the group of polynomial growth we are working on.

\begin{theorem}\label{SLLN} There is $\mu_d\ge0$ such that
	\begin{equation}\label{eq:SLLN}
	\lim_{n \to \infty}\frac{\calC_n }{n}=\mu_d,\qquad \Prob\text{-a.s.}
	\end{equation}
	Furthermore, $\mu_d=0$ if and only if  $d\le4$.
\end{theorem}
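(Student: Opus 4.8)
The plan is to derive the limit \eqref{eq:SLLN} from Kingman's subadditive ergodic theorem, and then to determine the sign of $\mu_d$ by a second-moment computation for the intersection of two independent copies of the walk. For the existence of the limit, the decisive property is that capacity is subadditive: for finite $A,B\subseteq\GG$,
$$\Capa(A\cup B)\le\Capa(A)+\Capa(B),$$
because $\Prob_x(\tau^+_{A\cup B}=\infty)\le\Prob_x(\tau^+_A=\infty)$ for every $x\in A$. Setting $a_{m,n}:=\Capa(\calR[m,n])$ and using $\calR[0,n]=\calR[0,m]\cup\calR[m,n]$, this gives $a_{0,n}\le a_{0,m}+a_{m,n}$. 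By the Markov property and left-invariance of the walk---which makes capacity translation invariant, $\Capa(gA)=\Capa(A)$---the array $\{a_{m,n}\}$ is stationary under the time shift, while $0\le a_{0,n}\le|\calR_n|\le n+1$ supplies integrability. As the increments $\{X_k\}$ are i.i.d., the shift is ergodic, so Kingman's theorem yields $\calC_n/n\to\mu_d$ both $\Prob$-a.s.\ and in $L^1$, with
$$\mu_d=\lim_{n\to\infty}\frac{\E[\calC_n]}{n}=\inf_{n\ge1}\frac{\E[\calC_n]}{n}\ge0,$$
and it remains to decide when this limit is zero.

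When $d\le2$ the walk is recurrent, so $\Prob_x(\tau^+_A=\infty)=0$ and $\calC_n\equiv0$; thus the content lies in the transient range $d\ge3$. Here I would start from the last-exit decomposition
$$\calC_n=\sum_{k=0}^n\bbjedan\{S_k\notin\calR[k+1,n]\}\,\Prob_{S_k}(\tau^+_{\calR_n}=\infty),$$
each point of $\calR_n$ being counted at its last visit. Using reversibility of the symmetric walk, the escape probability $\Prob_{S_k}(\tau^+_{\calR_n}=\infty)$ is controlled by the probability that a fresh walk from $S_k$ avoids the range, which after time reversal becomes a non-intersection event for two independent walks issued from $S_k$. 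In this way the decomposition delivers the upper bound $\E[\calC_n]\lesssim n\,q_n$, where $q_n$ is the probability that two independent length-$n$ walks started at a common point have disjoint ranges; the matching lower bound $\E[\calC_n]\gtrsim n\,q_n$ is more delicate and is discussed below.

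The threshold is then located by a single computation. On a group of polynomial growth of index $d$ the on-diagonal heat kernel satisfies $\Prob(S_{2n}=e)\asymp n^{-d/2}$ (Hebisch--Saloff-Coste, Varopoulos), whence for $d\ge3$ the Green function $G(x,y)=\sum_{n\ge0}\Prob_x(S_n=y)$ obeys $G(e,x)\asymp\rho(x)^{2-d}$. Since $\mathcal{V}(r)\asymp r^d$ gives $|\{x:\rho(x)\asymp r\}|\asymp r^{d-1}$, the expected number $N$ of intersection points of two independent infinite walks from $e$ is
$$\E[N]\asymp\sum_{x\in\GG}G(e,x)^2\asymp\sum_{r\ge1}r^{d-1}\cdot r^{2(2-d)}=\sum_{r\ge1}r^{3-d},$$
which converges precisely when $d\ge5$. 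For $d\ge5$ the two walks meet only finitely often, so $q_n\ge q_\infty>0$ and $\mu_d>0$; for $d=3,4$ they meet infinitely often $\Prob$-a.s., forcing $q_n\to0$ and hence, by the upper bound, $\mu_d=0$. Combined with the recurrent cases this establishes $\mu_d=0$ if and only if $d\le4$.

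The principal obstacle is the lower bound $\mu_d>0$ for $d\ge5$: positivity of the two-walk non-intersection probability must be upgraded to a positive density of genuinely \emph{exposed} sites of $\calR_n$---points from which the walk escapes the \emph{entire} range. I would obtain this through a regeneration argument, isolating times $k$ at which both the past range $\calR[0,k]$ and the future range $\calR[k,\infty)$ stay outside a fixed neighbourhood of $S_k$; the finiteness of $\E[N]$ guarantees that such times occur with positive density, and a block/independence argument turns this into $\E[\calC_n]\gtrsim n$. A secondary difficulty is that every estimate above rests on Gaussian-type heat kernel and Green function bounds valid on an arbitrary polynomial growth (virtually nilpotent) group rather than on $\ZZ^d$; these are provided by the results of Hebisch--Saloff-Coste and Varopoulos, but some care is required since balls in $\GG$ need not be as regular as Euclidean ones.
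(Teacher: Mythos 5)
Your first step (Kingman's subadditive ergodic theorem applied to $\Capa(\calR[m,n])$, using subadditivity and left-translation-invariance of capacity, then identifying the limit as a constant) is exactly the paper's Proposition~2.1, and your threshold heuristic $\sum_{g}\mathcal{G}(g)^2\asymp\sum_r r^{3-d}$ correctly locates $d=4$ as the critical index. The difficulties you flag, however, are real gaps, and in both cases the paper resolves them by routes different from (and more complete than) what you sketch. The most serious gap is in the case $d=3,4$: you assert that divergence of the expected number of intersections of two independent walks forces them to meet infinitely often $\Prob$-a.s., hence $q_n\to0$. That implication is a non sequitur -- an infinite first moment does not give a.s.\ (or even positive-probability) infinitely many intersections. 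This is precisely where the paper's work lies: it runs a second-moment method on $|V_k^1\cap V_k^3|$, the intersections inside dyadic annuli $\mathscr{B}_\GG(2^k)\setminus\mathscr{B}_\GG(2^{k-1})$ before exiting $\mathscr{B}_\GG(2^{k+1})$, to get $\Prob(E_{4k})\gtrsim 1/k$, and then must verify the quasi-independence bound $\Prob(E_{4k}\cap E_{4\ell})\lesssim\Prob(E_{4k})\Prob(E_{4\ell})$ needed for the Borel--Cantelli-type lemma. The latter requires an elliptic Harnack inequality for the pair walk on $\mathsf{H}=\langle\Gamma\times\Gamma\rangle$, together with a group-theoretic argument showing $\mathsf{H}$ has index at most $2$ in $\GG\times\GG$ and growth index $2d$. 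None of this is present in your proposal. (For $d=3$ the paper instead proves the sharper bound $\E[\calC_n]\asymp n^{1/2}$ via Lawler's non-intersection estimate; the group-specific obstacle there is that $\{\rho(S_n)\}$ need not be a submartingale, which is circumvented by proving $\E[Z_n^{-1}]\lesssim n^{1/2}$ from Gaussian heat-kernel bounds.)

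The second gap is the lower bound $\mu_d>0$ for $d\ge5$, which you leave as a regeneration/exposed-sites sketch. The paper dispenses with this entirely by a much shorter argument: the variational (energy) characterization
\begin{equation*}
\Capa(A)=\Bigl(\inf_\nu\sum_{g_1,g_2\in A}\mathcal{G}(g_1,g_2)\,\nu(g_1)\nu(g_2)\Bigr)^{-1},
\end{equation*}
applied with the empirical measure $\nu_n=\frac1n\sum_{k=1}^n\delta_{S_k}$, gives $\calC_n\ge n^2\bigl(\sum_{k,\ell}\mathcal{G}(S_k,S_\ell)\bigr)^{-1}$, and Jensen's inequality reduces everything to the first-moment estimate $\E[\sum_{k,\ell=0}^n\mathcal{G}(S_k,S_\ell)]\lesssim n$ for $d\ge5$. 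This is a complete two-step argument where your proposal leaves the hardest step open, and it also covers the superpolynomial case $d=\infty$ uniformly (where your Green-function asymptotics $\mathcal{G}(g)\asymp\rho(g)^{2-d}$ are not meaningful). I would recommend replacing the regeneration sketch with this energy argument and supplying the second-moment/Harnack machinery for $d=4$; as written, the proposal establishes the existence of $\mu_d$ but not the characterization of when it vanishes.
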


\begin{theorem}\label{CLT}
  Assume that  $d\ge6$.  There is  $\sigma_d>0$ such that
    \begin{equation}\label{eq:CLT}
	  \frac{\calC_n - \E \left[\calC_n\right] }{\sigma_d\,n^{1/2}}\xrightarrow[n\to\infty]{\rm{(d)}}   \calN(0, 1),
	\end{equation}
  where $\calN(0, 1)$ stands for the standard normal random variable and $\xrightarrow[n\to\infty]{\rm{(d)}}$ denotes the convergence in distribution.
\end{theorem}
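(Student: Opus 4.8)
The plan is to deduce the CLT from the martingale central limit theorem applied to the Doob decomposition of $\calC_n$ along the natural filtration $\calF_k=\sigma(X_1,\dots,X_k)$, together with an $L^2$-analysis of the correlation structure of the capacity increments. Writing
\begin{equation*}
\calC_n-\E[\calC_n]=\sum_{k=1}^n D_{k,n},\qquad D_{k,n}=\E[\calC_n\mid\calF_k]-\E[\calC_n\mid\calF_{k-1}],
\end{equation*}
the differences $\{D_{k,n}\}$ form, for each fixed $n$, a martingale-difference array. The first step is to produce, for $k$ well inside $[0,n]$, a \emph{stationary local approximation} $D_k$ of $D_{k,n}$ that depends only on the increments in a bounded window around time $k$ and on an independent ``escape'' trajectory issued from $S_k$; transience of $\{S_n\}_{n\ge0}$ (which holds for $d\ge3$ by the criterion recalled above) makes these escape events nondegenerate and is what allows the influence of distant increments on $D_{k,n}$ to be discarded up to an $L^2$-small error.

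The technical core is a quantitative estimate for the decay of the increment covariances $\operatorname{Cov}(D_{i,n},D_{j,n})$ in $|i-j|$. On a group of polynomial growth of index $d$ one does not have the Fourier tools available on $\ZZ^d$, so I would instead invoke the Gaussian heat-kernel bounds of Hebisch--Saloff-Coste, which give $p_m(x,y)\asymp m^{-d/2}\exp(-c\,\rho(x^{-1}y)^2/m)$ for $m\ge\rho(x^{-1}y)$ and, upon summation, the Green-function estimate $G(x,y)\asymp\rho(x^{-1}y)^{2-d}$ for $d\ge3$. The interaction between two far-apart portions of the range is then controlled by hitting and meeting probabilities of independent walks expressed through $G$, and summing these contributions over the time overlap yields a bound of the form $\sum_{j}\sup_k|\operatorname{Cov}(D_{k,n},D_{k+j,n})|<\infty$. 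The series converges in the regime $d\ge6$, and this summability is exactly what forces the dimensional threshold: it produces the linear variance asymptotics $\Var(\calC_n)\sim\sigma_d^2\,n$ and identifies $\sigma_d^2$ as the sum of the stationary covariances of the $D_k$.

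With the local approximation and the covariance bounds in place, I would verify the two hypotheses of the martingale CLT in triangular-array form: (i) convergence in probability of the normalized conditional variance $\tfrac1n\sum_{k=1}^n\E[D_{k,n}^2\mid\calF_{k-1}]\to\sigma_d^2$, which follows from the $L^2$-closeness $\E[(D_{k,n}-D_k)^2]\to0$ combined with the ergodic theorem for the shift applied to the stationary field $\{D_k\}$; and (ii) a Lyapunov condition $\tfrac1{n^2}\sum_{k=1}^n\E[D_{k,n}^4]\to0$, for which a uniform fourth-moment bound $\sup_{k,n}\E[D_{k,n}^4]<\infty$ is enough. The latter reduces, once more, to summable meeting probabilities of several independent walks estimated through $G$, again available under $d\ge6$.

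The step I expect to be the main obstacle is the nondegeneracy $\sigma_d>0$, i.e.\ ruling out a collapse of the variance to lower order. The upper bound $\Var(\calC_n)\lesssim n$ is the easy half; the matching lower bound requires showing that the field $\{D_k\}$ carries genuine order-$n$ fluctuation. I would approach this through a resampling (Efron--Stein type) lower bound: fix $k$, condition on $(X_j)_{j\ne k}$, and show that re-randomizing the single increment $X_k$ changes $\calC_n$ by a quantity whose conditional variance is bounded below uniformly in $k$ and $n$ --- heuristically, with positive probability $S_k$ lies on the outer frontier of the range and contributes a nonzero, genuinely fluctuating amount of escaping capacity. Summing these conditionally independent contributions gives $\Var(\calC_n)\gtrsim n$. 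The delicate point throughout is the boundary dimension $d=6$, where the governing covariance series is only marginally convergent, so the heat-kernel and Green-function estimates must be applied in their sharp form; I expect most of the work to concentrate there.
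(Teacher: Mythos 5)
Your proposal takes a genuinely different route from the paper, but as it stands it has gaps that go beyond routine bookkeeping. The central missing piece is the ``stationary local approximation'' of the Doob increments $D_{k,n}=\E[\calC_n\mid\calF_k]-\E[\calC_n\mid\calF_{k-1}]$, which you assert but do not construct. Note that resampling $X_k$ does not perturb the path locally: if $X_k$ is replaced by $X_k'$, then every $S_j$ with $j\ge k$ is replaced by $\bigl(S_{k-1}X_k'X_k^{-1}S_{k-1}^{-1}\bigr)S_j$, i.e.\ the \emph{entire tail} of the range is translated relative to the head. The effect of this on $\Capa(\calR_n)$ is a global quantity governed by Green-function interactions between the two pieces, so localizing $D_{k,n}$ to a bounded window plus an escape trajectory is precisely the hard analytic content, not a preliminary reduction. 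Relatedly, the quantity you propose to estimate, $\operatorname{Cov}(D_{i,n},D_{j,n})$ for $i\neq j$, vanishes identically (martingale differences are orthogonal); what actually needs summable decay is the covariance of the local approximations $D_k$, which cannot be estimated before those approximations exist. Finally, the nondegeneracy $\sigma_d>0$ is left at the level of a heuristic (``$S_k$ lies on the outer frontier with positive probability''); an Efron--Stein-type resampling bound in the correct direction requires a uniform lower bound on the conditional variance of $\E[\calC_n\mid\calF_k]$ under resampling of $X_k$, and the translation-of-the-tail structure above makes this delicate. You correctly flag this as the main obstacle, but no mechanism is supplied.

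For comparison, the paper avoids the martingale route entirely. It uses the dyadic decomposition of \cite[Corollary 2.1]{Asselah_Zd}, writing $\calC_n$ as a sum of $2^L$ independent copies of $\calC_{n/2^L}$ up to cross Green-function error terms $\mathcal{G}(\calR^{(i)},\bar\calR^{(i)})$, which are controlled by Proposition \ref{lm:bounds_on_GGG} (this is where $d\ge6$ enters, through $\mathcal{E}(n)=\mathsf{o}(\sqrt{n})$). Convergence of $\Var(\calC_n)/n$ then follows from an approximate-subadditivity argument and the de Bruijn--Erd\H{o}s extension of Fekete's lemma, positivity of $\sigma_d^2$ from the double-backtrack decomposition of \cite[Lemma 3.3]{Asselah_Zd}, and the Gaussian limit from the Lindeberg--Feller theorem applied to the independent block array with $L=\lfloor\log_2 n^{1/4}\rfloor$, using a fourth-moment bound. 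The block approach buys exact independence where your approach would need the (unproved) localization and mixing of the increments; if you want to pursue the martingale route, the localization lemma and the variance lower bound are the two places where real work is required.
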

\noindent We note that the restriction $d\ge6$ in Theorem \ref{CLT} is necessary if one is interested in a general result claiming that in every group with growth index $d$ there is a nondegenerate Gaussian limit under the scaling $n^{1/2}$. Indeed, for the symmetric simple random walk on $\ZZ^5$, \cite[Theorem B]{Schapira_5} shows that one requires $(n\log n)^{1/2}$ scaling in \eqref{CLT}. Additionally, for the symmetric simple random walk on $\ZZ^3$ or $\ZZ^4$, \cite[Theorem 1.2]{Chang} and \cite[Theorem 1.1]{Asselah_Z4} proved that  
 the limit in \eqref{eq:CLT} is non-Gaussian (case $d=4$ also requires $n/\log^2n$ scaling). Finally, from \cite[Theorems VI.2.1, VI.2.2, VI.3.3 and VI.5.1]{Varopoulos-Saloff-Coste-Coulhon} one easily concludes that $d\le2$ 
 if and only if the underlying random walk is recurrent. It is easy to see that in this case we have $\calC_n\equiv0$.

\subsection*{Literature overview and related results}

The study on the range process \linebreak $\{\calR_n\}_{n\ge0}$ of  $\ZZ^d$-valued random walks  has a long history. 

A pioneering work is due to Dvoretzky and Erd\H{o}s \cite{Dvoretzky} where they obtained a SLLN for $\{|\calR_n|\}_{n\ge0}$ in the context of the symmetric simple random walk in dimension $d\ge2$.  Their result was later extended by Spitzer  \cite{Spitzer} for an arbitrary random walk in dimension $d\ge1$.  CLT for $\{|\calR_n|\}_{n\ge0}$ was obtained by Jain and Orey \cite{Jain-Orey}  for so-called strongly transient random walks. Jain and Pruit \cite{Jain-Pruitt-1971} extended this result to all random walks in dimension $d\ge3$, while  Le Gall \cite{LeGall-1986} discussed  this problem in $\ZZ^2$.  Jain and Pruitt \cite{Jain-Pruitt.1972} also established a law of the iterated logarithm for $\{|\calR_n|\}_{n\ge0}$ in the case when the underlying random walk is either strongly transient or in dimension $d\ge4.$ Bass and Kumagai \cite{Bass-Kumagai-2002}  later extended this result to $\ZZ^2$ and $\ZZ^3$. 

The first results on the asymptotic behavior  of the capacity process $\{\calC_n\}_{n\ge0}$  are due to  Jain and Orey \cite{Jain-Orey} who obtained a version of the  SLLN for any transient random walk.
Very recently Asselah, Schapira and Sousi \cite{Asselah_Zd} proved  a CLT  for $\{\calC_n\}_{n\ge0}$ for the symmetric simple random walk in dimension $d\ge6$. Schapira \cite{Schapira_5} obtained an analogous result in  $\ZZ^5$, while  versions of  a SLLN and CLT in  $\ZZ^4$ and $\ZZ^3$
 were proved by Asselah, Schapira and Sousi in \cite{Asselah_Z4} and Chang in \cite{Chang}, respectively. 

On the other hand, to the best of our knowledge, there are only a few works on asymptotic behavior of the range process of random walks on  groups other than $\ZZ^d$. One of the first results in this direction is due to Chen, Yan and Zhou \cite{Chen-Yan-Zhou-1997} who obtained a SLLN and CLT for $\{|\calR_n|\}_{n\ge0}$ of the symmetric simple random walk on the $N$-ary tree with $N\ge2$ (i.e. the free product $\ZZ_2^{\ast N}$ with the standard $N$ generators). Recently, Benjamini, Izkovsky and Kesten \cite{Benjamini}
studied a  variant of the SLLN for $\{|\calR_n|\}_{n \ge 0}$ conditioned on the event $\{S_n=S_0\}$ (the so-called random walk bridge)
associated to a symmetric random walk on  a finitely generated group.
They  showed that the conditional and corresponding unconditional SLLN limits coincide. Finally, entropy properties of the range process of random walks on finitely and infinitely generated groups were studied by Chen,  Xie and Zhao \cite{Chen-Xie-Zhao}, Erschler \cite{Erschler} and Windsch \cite{Windsch}.

As we have already commented, in this paper we focus on the asymptotic behavior of the capacity process $\{\calC_n\}_{n\ge0}$ associated to a symmetric random walk on a finitely generated group. As the main results we obtain the SLLN and CLT stated in Theorems \ref{SLLN} and \ref{CLT}, and show that these types of behaviours, as in the symmetric simple random walk on $\ZZ^d$,  depend on the growth index $d$ only.

\subsection*{Notation}
Clearly, $\{S_n\}_{n\ge0}$ satisfies the (strong) Markov property (with respect to the corresponding natural filtration) with transition probabilities given by 
  $$p_n(g_1,g_2)=\Prob_{g_1}(S_n=g_2).$$
Note that $p_n$ is left-translation-invariant (i.e., $p_n(hg_1,hg_2)=p_n(g_1,g_2)$ for all $h\in\GG$) so we may consider $p_n$s as one-variable functions, i.e.\ $p_n(g_1^{-1}g_2)=p_n(e,g_1^{-1}g_2)=p_n(g_1,g_2)$.

We denote by 
\begin{align*}
\mathcal{G}(g_1,g_2)=\sum_{n\ge0}p_n(g_1,g_2)
\end{align*}
the Green function of $\{S_n\}_{n\ge0}$. Due to the aforementioned left-translation-invariance, we sometimes write $\mathcal{G}(g_1^{-1}g_2)$ instead of $\mathcal{G}(g_1,g_2)$. 

We will use the standard asymptotic notation. For $f,g:\N\to[0,\infty)$  we write $f(n) \lesssim g(n)$ if and only if there is a constant $c>0$  such that $f(n)\le c \,g(n)$ for all $n\in\N$. We would like to stress that, unless otherwise specified, implied constant $c$ will depend only on $\GG$ (most of the time only on its growth index $d$ and the set of generators $\Gamma$). In rare circumstances where it depends on some other parameters, we will indicate them by adding a subscript after $\lesssim$. We will write $f(n) \asymp g(n)$ if and only if $f(n) \lesssim g(n)$ and $g(n) \lesssim f(n)$. Finally, we use the standard $\mathsf{o}$ notation: for $f,g:\N\to[0,\infty)$  we write $f(n) =\mathsf{o}(g(n))$ if and only if $\lim_{n\to\infty}f(n)/g(n)=0$.

\subsection*{Organisation of the paper}
The rest of the paper is organised as follows. In Section \ref{S2}, we prove Theorem \ref{SLLN}. We first show that the limit in \eqref{eq:SLLN} exists and is finite, and then we show that it vanishes if and only if $d\le4$.  In Section \ref{S3},  we present minor modifications needed in the proof of \cite[Theorem 1.1]{Asselah_Zd} to prove Theorem \ref{CLT}. Roughly, we first  determine the constant $\sigma_d$ in \eqref{eq:CLT}, and then by employing the Lindeberg-Feller theorem we prove Theorem \ref{CLT}. At the end we also briefly discuss the functional version of the CLT in  \eqref{eq:CLT}.

\section{On the SLLN for $\{\calC_n\}_{n\ge0}$}\label{S2}

In this section, we prove  Theorem \ref{SLLN}. 
We start with the proof of \eqref{eq:SLLN}.

\begin{proposition}\label{Proposition 2.1} 
There is $\mu_d\ge0$ such that
$$
	\lim_{n \to \infty}\frac{\calC_n }{n}=\mu_d,\qquad \Prob\text{-a.s. and in } \mathrm{L}^1.
$$
\end{proposition}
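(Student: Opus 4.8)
The plan is to exhibit $\{\calC_n\}_{n\ge0}$ as (the diagonal of) a subadditive stationary array and invoke Kingman's subadditive ergodic theorem. For $0\le m\le n$ set $g_{m,n}=\Capa(\calR[m,n])$, so that $g_{0,n}=\calC_n$. The three facts I need are: subadditivity of capacity, a stationarity/shift-covariance property of the array $\{g_{m,n}\}$, and an integrability bound. Once these are in place, Kingman's theorem delivers both the almost-sure and the $\mathrm{L}^1$ convergence simultaneously, together with the identification of the limit.

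First I would record two elementary properties of the set function $A\mapsto\Capa(A)$. Writing $e_A(g)=\Prob_g(\tau^+_A=\infty)$, the left-translation-invariance of the transition kernel $p_n$ immediately gives $e_{hA}(hg)=e_A(g)$, hence $\Capa(hA)=\Capa(A)$ for every $h\in\GG$; that is, capacity is left-invariant. For subadditivity, note that enlarging a set only makes returning to it easier, so for $A\subseteq A\cup B$ and $x\in A$ we have $\{\tau^+_{A\cup B}=\infty\}\subseteq\{\tau^+_A=\infty\}$ and thus $e_{A\cup B}(x)\le e_A(x)$; splitting the sum over $A\cup B$ into the parts $x\in A$ and $x\in (A\cup B)\setminus A\subseteq B$ yields
\begin{equation*}
\Capa(A\cup B)\le \sum_{x\in A}e_A(x)+\sum_{x\in B}e_B(x)=\Capa(A)+\Capa(B).
\end{equation*}

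Next I would verify Kingman's hypotheses. Since $\calR[0,n]=\calR[0,m]\cup\calR[m,n]$, subadditivity of capacity gives $g_{0,n}\le g_{0,m}+g_{m,n}$. Using left-invariance, $g_{m,n}=\Capa\bigl(S_m\cdot\{e,X_{m+1},\dots,X_{m+1}\cdots X_n\}\bigr)=\Capa\bigl(\{e,X_{m+1},\dots,X_{m+1}\cdots X_n\}\bigr)$, so $g_{m,n}$ is a fixed measurable function of the increments $(X_{m+1},\dots,X_n)$ alone; as $\{X_k\}_{k\in\N}$ is i.i.d., the array $\{g_{m,n}\}$ is stationary under the index shift (indeed $g_{m+1,n+1}=g_{m,n}\circ\Theta$ for the i.i.d.\ shift $\Theta$). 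Finally, since each equilibrium probability is at most $1$, we have the crude bound $0\le\calC_n\le|\calR_n|\le n+1$, so every $g_{0,n}$ is integrable and $\E[g_{0,n}]\ge0$.

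Kingman's subadditive ergodic theorem then yields $\calC_n/n\to\mu_d$ almost surely and in $\mathrm{L}^1$, where $\mu_d=\inf_{n\ge1}\E[\calC_n]/n=\lim_{n\to\infty}\E[\calC_n]/n\ge0$; the limit is deterministic because the shift $\Theta$ on the i.i.d.\ sequence is ergodic, and $\mu_d\ge0$ is immediate from $\calC_n\ge0$. None of the individual steps is deep: the only points requiring care are the decoupling of $g_{m,n}$ from the position $S_m$ via left-invariance (which is precisely what makes the array genuinely stationary rather than merely subadditive along the diagonal) and the correct bookkeeping in the splitting argument for subadditivity. I expect these, rather than any analytic estimate, to be the substance of the argument.
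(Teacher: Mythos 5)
Your proposal is correct and follows essentially the same route as the paper: both set up the two-parameter array $\Capa(\calR[m,n])$, check subadditivity and stationarity via left-invariance of the capacity, and apply Kingman's subadditive ergodic theorem. The only cosmetic difference is in how constancy of the limit is obtained — you invoke ergodicity of the i.i.d.\ shift, while the paper observes that the limit is tail-measurable and applies Kolmogorov's $0$-$1$ law directly — and these amount to the same thing.
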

\begin{proof} For $0\le m\le n$, let $$\mathcal{F}_{m,n}=\Capa(\calR[m,n]).$$ Clearly, the family $\{\mathcal{F}_{m,n}:0\le m\le n\}$ forms a subadditive process in the sense of \cite[Chapter 1]{Krengel}. Then, according to Kingman's subadditive ergodic theorem (see \cite[Chapter 1, Theorem 5.3]{Krengel}) it holds that $\{\calC_n/n\}_{n\ge1}$ converges a.s.\ and in $\mathrm{L}^1$ to a nonnegative integrable random variable. Further, for $0\le m\le n$,  using left-translation-invariance and subadditivity of the capacity (see \cite[Proposition 25.11]{Spitzer}) we obtain
  \begin{align*}\Capa(\{X_n^{-1}\cdots X_{m+1}^{-1},\dots,e\})&=\Capa(X_n^{-1}\cdots X_1^{-1}\calR[m,n])=\Capa(\calR[m,n])\\&\le \calC_n\le \calC_{m}+\Capa(\calR[m,n])\\
  &= \calC_{m}+\Capa(\{X_n^{-1}\cdots X_{m+1}^{-1},\dots,e\}).\end{align*}
Thus,
  $$\lim_{n \to \infty}\frac{\calC_n}{n}=\lim_{n \to \infty}\frac{\Capa(\{X_n^{-1}\cdots X_{m+1}^{-1},\dots,e\})}{n},$$
which shows that the limit is measurable with respect to the tail $\sigma$-algebra \linebreak $\bigcap_{m=1}^\infty\sigma(X_m, X_{m+1},\dots)$. According to Kolmogorov's $0$-$1$ law, this $\sigma$-algebra is trivial. The assertion now follows from \cite[Chapter 2, Lemma 2.9]{Kallenberg}.
		\end{proof}
As a direct consequence of Proposition \ref{Proposition 2.1} it follows that \begin{equation}\label{eqe}\lim_{n \to \infty}\frac{\E[\calC_n] }{n}=\mu_d.\end{equation}

We now turn to showing that the limit $\mu_d$ in \eqref{eq:SLLN} vanishes if and only if   $d\le4$. As we have already commented, when $d\le2$ it necessarily holds that $\calC_n\equiv0$. We first show several auxiliary results.

\begin{proposition}\label{prop2.x}
If $d=\infty$, then 
$$
\sup_{g\in\GG}p_n(g) \lesssim_q n^{-q}$$ for all $q\ge0$. If $d<\infty$, then $$
\sup_{g\in\GG}p_n(g) \lesssim n^{-d/2}\qquad \text{and}\qquad p_{2n}(e)\gtrsim n^{-d/2}.$$ 
\end{proposition}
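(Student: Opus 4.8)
The plan is to reduce all three estimates to a single on-diagonal bound on the return probability, namely $p_{2n}(e)\asymp n^{-d/2}$ when $d<\infty$ and $p_{2n}(e)\lesssim_q n^{-q}$ for every $q$ when $d=\infty$, and then to transfer this to $\sup_g p_n(g)$ by soft functional-analytic arguments. Throughout I write $Pf(g)=|\Gamma|^{-1}\sum_{\gamma\in\Gamma}f(g\gamma)$ for the Markov operator of $\{S_n\}_{n\ge0}$; since $\Gamma=\Gamma^{-1}$ it is self-adjoint on $\ell^2(\GG)$ with $\|P\|\le1$.

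First I would record the elementary reductions. Because $p_{2n}(e)=\langle P^{2n}\delta_e,\delta_e\rangle=\|P^n\delta_e\|_2^2=\sum_{g}p_n(g)^2$ and, by left-invariance, $\|P^n\delta_g\|_2^2=\sum_h p_n(h^{-1}g)^2=p_{2n}(e)$ for every $g$, Cauchy--Schwarz yields
\[
p_{2n}(g)=\langle P^n\delta_e,P^n\delta_g\rangle\le\|P^n\delta_e\|_2\,\|P^n\delta_g\|_2=p_{2n}(e),
\]
so $\sup_g p_{2n}(g)=p_{2n}(e)$. The same estimate with one extra factor of $P$ gives $p_{2n+1}(g)\le p_{2n}(e)^{1/2}p_{2n+2}(e)^{1/2}$, and since $p_{2n+2}(e)=\|P^{n+1}\delta_e\|_2^2\le\|P^n\delta_e\|_2^2=p_{2n}(e)$ the sequence $n\mapsto p_{2n}(e)$ is non-increasing and $\sup_g p_m(g)\le p_{2\lfloor m/2\rfloor}(e)$ for all $m$. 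Hence both upper bounds (and the whole $d=\infty$ assertion) follow at once from the on-diagonal upper bound.

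For that upper bound I would invoke the volume-growth/return-probability machinery. Polynomial growth $\mathcal{V}(r)\gtrsim r^d$ yields, through the Coulhon--Saloff-Coste isoperimetric inequality on Cayley graphs, a Nash inequality $\|f\|_2^{2+4/d}\lesssim\calE(f,f)\,\|f\|_1^{4/d}$ with $\calE(f,f)=\langle(I-P)f,f\rangle$; feeding $u_n=P^n\delta_e$ (for which $\|u_n\|_1=1$) into the standard Nash differential-inequality argument then gives $p_{2n}(e)=\|u_n\|_2^2\lesssim n^{-d/2}$, the periodicity of $P$ being handled by running the argument for the lazy operator $(I+P)/2$ and transferring. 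When $d=\infty$ the hypothesis $\mathcal{V}(r)\gtrsim_D r^D$ holds for every $D\ge0$, so the same argument produces $p_{2n}(e)\lesssim_D n^{-D/2}$ for every $D$, which is precisely $p_{2n}(e)\lesssim_q n^{-q}$ for all $q$. All of this is contained in \cite[Chapter~VI]{Varopoulos-Saloff-Coste-Coulhon}, so I would cite it rather than reproduce the computation.

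The lower bound $p_{2n}(e)\gtrsim n^{-d/2}$ I would obtain by pairing the on-diagonal upper bound with the volume upper bound $\mathcal{V}(r)\lesssim r^d$ and diffusive concentration. Setting $B=\{g:\rho(g)\le C\sqrt n\}$, Cauchy--Schwarz gives
\[
\sum_{g\in B}p_n(g)\le|B|^{1/2}\Big(\sum_g p_n(g)^2\Big)^{1/2}=\mathcal{V}(C\sqrt n)^{1/2}\,p_{2n}(e)^{1/2}\lesssim n^{d/4}\,p_{2n}(e)^{1/2},
\]
so it suffices to fix $C$ with $\Prob(\rho(S_n)\le C\sqrt n)\ge1/2$, after which $1/2\lesssim n^{d/4}p_{2n}(e)^{1/2}$ rearranges to the claim. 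I expect this diffusive displacement bound to be the main obstacle: unlike mere volume growth it genuinely uses virtual nilpotency (on a free group the walk has linear speed and the argument fails), and I would deduce it from $\E[\rho(S_n)^2]\lesssim n$ via Markov's inequality. The second-moment bound in turn I would get by summing the Gaussian heat-kernel upper bound $p_n(g)\lesssim n^{-d/2}e^{-c\rho(g)^2/n}$ over spheres of size $|\{g:\rho(g)=r\}|\lesssim r^{d-1}$, which produces $\sum_g\rho(g)^2p_n(g)\lesssim n^{-d/2}\sum_{r}r^{d+1}e^{-cr^2/n}\asymp n$. This Gaussian bound is again classical for polynomial-growth groups (Hebisch--Saloff-Coste; it is also covered by \cite{Varopoulos-Saloff-Coste-Coulhon}), so the lower bound reduces to citing it alongside the elementary Cauchy--Schwarz step.
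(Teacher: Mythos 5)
Your argument is correct and rests on the same machinery the paper invokes: the paper's proof is a one-line citation of Theorems VI.2.1, VI.2.2, VI.3.3 and VI.5.1 of Varopoulos--Saloff-Coste--Coulhon, and your reductions (the self-adjointness/Cauchy--Schwarz step giving $\sup_{g}p_m(g)\le p_{2\lfloor m/2\rfloor}(e)$, the Nash-inequality route from volume growth to the on-diagonal upper bound, and the derivation of the lower bound from the volume upper bound plus diffusive displacement) are exactly what those cited results package. The only cosmetic slip is the sphere estimate $|\{g\in\GG:\rho(g)=r\}|\lesssim r^{d-1}$, which does not follow from $\mathcal{V}(r)\asymp r^d$ alone; summing the Gaussian upper bound over dyadic annuli instead yields the same conclusion $\E[\rho(S_n)^2]\lesssim n$ using only ball volumes.
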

\begin{proof}The assertions are a direct consequence of  \cite[Theorems VI.2.1, VI.2.2, VI.3.3 and VI.5.1]{Varopoulos-Saloff-Coste-Coulhon}.
\end{proof}

Further, we conclude the following.

\begin{proposition}\label{Proposition 2.3}
  We have
  $$\E\ugl{\sum_{k, \ell = 0}^n \mathcal{G}(S_k, S_\ell)} \lesssim \begin{cases}
		  n & d\ge5,\\
		  n\log n & d=4,\\
		  n^{3/2} & d=3.
	\end{cases}$$
\end{proposition}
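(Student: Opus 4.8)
The plan is to compute the expectation of each summand exactly and then sum. Fix $0\le k\le \ell\le n$ (the case $k>\ell$ being symmetric, since $\mathcal{G}(g)=\mathcal{G}(g^{-1})$ by symmetry of the walk). Using left-translation-invariance I write $\mathcal{G}(S_k,S_\ell)=\mathcal{G}(S_k^{-1}S_\ell)$, and since $S_k^{-1}S_\ell=X_{k+1}\cdots X_\ell$ has the same law as $S_{\ell-k}$, I get
$$\E[\mathcal{G}(S_k,S_\ell)]=\sum_{m\ge0}\E[p_m(S_{\ell-k})]=\sum_{m\ge0}\sum_{g\in\GG}p_{\ell-k}(g)\,p_m(g).$$
By symmetry of the generating set, $p_m(g)=p_m(g^{-1})=p_m(g,e)$, so Chapman--Kolmogorov gives $\sum_g p_{\ell-k}(e,g)\,p_m(g,e)=p_{m+\ell-k}(e)$. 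Hence
$$\E[\mathcal{G}(S_k,S_\ell)]=\sum_{j\ge|\ell-k|}p_j(e)=:G_{|\ell-k|}.$$

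Next I would reduce the double sum to a one-dimensional one. Grouping pairs by their difference $r=|\ell-k|$,
$$\E\left[\sum_{k,\ell=0}^n\mathcal{G}(S_k,S_\ell)\right]=(n+1)G_0+2\sum_{r=1}^n(n+1-r)\,G_r\lesssim n\,G_0+n\sum_{r=1}^n G_r.$$
Since $d\ge3$ the walk is transient, so $G_0=\mathcal{G}(e)<\infty$ and the first term is $\lesssim n$. It remains to bound $\sum_{r=1}^n G_r$.

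The key estimate is Proposition \ref{prop2.x}, which gives $p_j(e)\lesssim j^{-d/2}$ for $j\ge1$ and hence $G_r\lesssim r^{1-d/2}$. Swapping the order of summation,
$$\sum_{r=1}^n G_r=\sum_{j\ge1}p_j(e)\,\min(j,n)=\sum_{j=1}^n j\,p_j(e)+n\sum_{j>n}p_j(e)\lesssim\sum_{j=1}^n j^{1-d/2}+n^{2-d/2}.$$
Evaluating the right-hand side in each regime gives $\lesssim1$ when $d\ge5$, $\lesssim\log n$ when $d=4$, and $\lesssim n^{1/2}$ when $d=3$; multiplying by the prefactor $n$ yields exactly the three claimed bounds.

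The computation is essentially routine once the first identity is in place. The only delicate point is the manipulation of $\E[\mathcal{G}(S_k,S_\ell)]$ into a single return probability $p_{m+\ell-k}(e)$, where one must correctly invoke both the symmetry of the generating set (to identify $p_m(g)$ with $p_m(g,e)$) and Chapman--Kolmogorov; after that, the main obstacle is merely the bookkeeping in the summation-order swap, in particular keeping track of the boundary term $n\sum_{j>n}p_j(e)$, which is precisely what produces the transition between the $d\ge5$, $d=4$ and $d=3$ regimes.
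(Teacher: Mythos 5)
Your proof is correct and follows essentially the same route as the paper's: the same reduction of $\E[\mathcal{G}(S_k,S_\ell)]$ to $\sum_{m\ge|\ell-k|}p_m(e)$ via Chapman--Kolmogorov and symmetry, followed by the same regrouping by the difference $|\ell-k|$ and an application of the heat-kernel upper bounds from Proposition~\ref{prop2.x}. The only difference is that you carry out the "elementary summations" explicitly (including the boundary term $n\sum_{j>n}p_j(e)$), which the paper leaves to the reader.
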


\begin{proof}
  We have that
	\begin{align*}
	 \E\ugl{\sum_{k, \ell = 0}^n \mathcal{G}(S_k, S_\ell)}
	&= \E\ugl{\sum_{k = 0}^n \mathcal{G}(e) 
		+ \sum_{k = 0}^n \sum_{\substack{\ell = 0 \\ l \neq k}}^n \mathcal{G}(S_{\aps{k - \ell}})} \\
	&=\E\ugl{(n+1) \mathcal{G}(e) + 2\sum_{k = 0}^n  \sum_{\ell = 1}^{n-k} \mathcal{G}(S_\ell)} \\
		&=(n+1) \mathcal{G}(e) + 2\sum_{k = 0}^n  \sum_{\ell = 1}^{n-k} \sum_{g\in\GG}\mathcal{G}(g)\Prob(S_\ell=g) \\
		&=(n+1) \mathcal{G}(e) + 2\sum_{k = 0}^n  \sum_{\ell = 1}^{n-k} \sum_{g\in\GG}\sum_{m=0}^\infty p_m(e,g)p_\ell(g,e) \\
			&=(n+1) \mathcal{G}(e) + 2\sum_{k = 0}^n  \sum_{\ell = 1}^{n-k} \sum_{m=\ell}^\infty p_{m}(e).
			\end{align*}
  The statement of the proposition now follows using the upper bounds from Proposition \ref{prop2.x} and elementary summations.
\end{proof}

As a consequence of Proposition \ref{Proposition 2.3} we conclude the following.

\begin{proposition}\label{Proposition 2.4}
  We have
    $$\E \left[\calC_n\right] \gtrsim \begin{cases}
		  n & d\ge5,\\
		  n/\log n & d=4,\\
		  n^{1/2} & d=3.
		\end{cases}$$
\end{proposition}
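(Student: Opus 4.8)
The plan is to bound $\calC_n$ from below by the ratio of the squared size of the range to the two-point Green energy appearing in Proposition \ref{Proposition 2.3}, and then to pass to expectations using the Cauchy--Schwarz inequality. Since $d\ge3$ guarantees transience, the Green function $\mathcal{G}$ is finite and the capacity of a finite set $A$ admits the variational (energy) characterisation
\begin{equation*}
  \Capa(A)=\left(\min_{\mu}\sum_{x,y\in A}\mathcal{G}(x,y)\,\mu(x)\mu(y)\right)^{-1},
\end{equation*}
where the minimum runs over all probability measures $\mu$ supported on $A$; testing against the uniform measure $\mu\equiv1/|A|$ yields the lower bound
\begin{equation*}
  \Capa(A)\ge\frac{|A|^2}{\sum_{x,y\in A}\mathcal{G}(x,y)}.
\end{equation*}
Applying this with $A=\calR_n$ and bounding the sum over distinct sites by the sum over index pairs, $\sum_{x,y\in\calR_n}\mathcal{G}(x,y)\le\sum_{k,\ell=0}^n\mathcal{G}(S_k,S_\ell)$, gives the pointwise inequality $\calC_n\ge|\calR_n|^2\big/\sum_{k,\ell=0}^n\mathcal{G}(S_k,S_\ell)$.

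Next I would take expectations. Writing $R_n=|\calR_n|$ and $Z_n=\sum_{k,\ell=0}^n\mathcal{G}(S_k,S_\ell)$, the Cauchy--Schwarz inequality gives $\E[R_n]=\E\big[(R_n/\sqrt{Z_n})\sqrt{Z_n}\big]\le\E[R_n^2/Z_n]^{1/2}\,\E[Z_n]^{1/2}$, whence
\begin{equation*}
  \E[\calC_n]\ge\E\!\left[\frac{R_n^2}{Z_n}\right]\ge\frac{(\E[R_n])^2}{\E[Z_n]}=\frac{(\E[|\calR_n|])^2}{\E\big[\sum_{k,\ell=0}^n\mathcal{G}(S_k,S_\ell)\big]}.
\end{equation*}
The denominator is controlled by Proposition \ref{Proposition 2.3}, so it remains to prove that the expected range grows linearly, $\E[|\calR_n|]\gtrsim n$.

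For the range lower bound I would use transience. Since $\{S_n\}_{n\ge0}$ is symmetric, by reversing the increments and using $\Gamma=\Gamma^{-1}$ one gets $\Prob(S_k\neq S_j\text{ for all }0\le j<k)=\Prob(\tau^+_{\{e\}}>k)\ge\Prob(\tau^+_{\{e\}}=\infty)=1/\mathcal{G}(e)>0$ for every $k$, the positivity being exactly transience, which holds for $d\ge3$. Summing over $k$ yields $\E[|\calR_n|]=\sum_{k=0}^n\Prob(S_k\notin\{S_0,\dots,S_{k-1}\})\ge(n+1)/\mathcal{G}(e)\gtrsim n$. Combining the three displays with the three cases of Proposition \ref{Proposition 2.3} then gives $\E[\calC_n]\gtrsim n^2/n=n$ for $d\ge5$, $\E[\calC_n]\gtrsim n^2/(n\log n)=n/\log n$ for $d=4$, and $\E[\calC_n]\gtrsim n^2/n^{3/2}=n^{1/2}$ for $d=3$, as claimed.

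I expect the main obstacle to be justifying the energy lower bound for $\Capa$ in the group setting: one must confirm that the discrete potential theory underlying the variational formula (existence of the equilibrium measure $e_A$ with $\sum_{y\in A}\mathcal{G}(x,y)e_A(y)=1$ for $x\in A$, and $\Capa(A)=\sum_{x,y}\mathcal{G}(x,y)e_A(x)e_A(y)$) carries over verbatim from $\ZZ^d$ to an arbitrary transient symmetric random walk on $\GG$, which it does since only transience and symmetry of $\mathcal{G}$ are used. The remaining steps---the Cauchy--Schwarz passage to expectations and the linear range bound---are routine.
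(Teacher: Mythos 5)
Your proof is correct, and its core is the same as the paper's: both rest on the variational (energy) characterisation of capacity, $\Capa(A)=\bigl(\inf_\nu\sum_{g_1,g_2\in A}\mathcal{G}(g_1,g_2)\nu(g_1)\nu(g_2)\bigr)^{-1}$ (which the paper likewise imports as ``a straightforward modification of'' Jain's Lemma 2.3, so your worry about transferring it to groups is exactly the point the paper also has to address), followed by Proposition \ref{Proposition 2.3} to control the energy. The difference is the choice of test measure. The paper tests against the empirical occupation measure $\nu_n=\frac1n\sum_{k=1}^n\delta_{S_k}$, which makes the numerator deterministically equal to $n^2$, so a single application of Jensen's inequality to $x\mapsto 1/x$ finishes the proof. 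You test against the uniform measure on $\calR_n$, which puts the random quantity $|\calR_n|^2$ in the numerator; this forces you to (a) compare the sum over distinct range points with the sum over index pairs, (b) replace Jensen by a Cauchy--Schwarz manipulation to decouple $|\calR_n|^2$ from the energy, and (c) prove the extra lemma $\E[|\calR_n|]\gtrsim n$ via transience and time reversal (your reversal argument, using $\Gamma=\Gamma^{-1}$, is the standard Dvoretzky--Erd\H{o}s/Spitzer one and is valid here). All of these steps are sound, so your argument works; it is just slightly longer than necessary, and the paper's choice of $\nu_n$ shows how to bypass the range estimate entirely.
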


\begin{proof}
	For fixed $n\ge1$ we consider the following (random) probability measure defined on $\GG$: 
	\begin{equation}\label{eq:def_of_nu}
	\nu_n(g) = \frac{1}{n} \sum_{k = 1}^{n} \delta_{S_k}(g).
	\end{equation}
	Clearly, $\mathrm{supp}\,\nu_n =\mathcal{R}[1,n]$.
	 According to (a straightforward modification of) \cite[Lemma 2.3]{Jain},  the capacity of a set $A\subseteq\GG$ has the following representation 
	 $$\mathrm{Cap}(A)=\frac{1}{\inf_\nu\sum_{g_1,g_2\in A}\mathcal{G}(g_1,g_2)\nu(g_1)\nu(g_2)},$$ 
	 where the infimum is taken over all probability measures on $\GG$ with $\mathrm{supp}\,\nu\subseteq A.$
Clearly, $$\E\left[\sum_{g_1, g_2 \in \mathcal{R}_n} \mathcal{G}(g_1, g_2) \nu_n(g_1)\nu_n(g_2)\right]=\frac{1}{n^2}\E\ugl{\sum_{k, \ell = 1}^n \mathcal{G}(S_k, S_\ell)}.$$ Recall that $\calC_n=\mathrm{Cap}(\mathcal{R}_n)$. The assertion now follows from Jensen's inequality and Proposition \ref{Proposition 2.3}.
\end{proof}

Notice that the previous proposition already proves the $d\ge5$ case of the second assertion of Theorem \ref{SLLN}. To cover the cases $d=3$ and $d=4$, we need to prove that in those settings we have $\E \left[\calC_n\right] = \mathsf{o}(n)$. 

For $r>0$,  let  $\mathscr{B}_\GG(r)=\{g\in\GG\colon \rho(g)<r\}$ and $\overline{\mathscr{B}}_\GG(r)=\{g\in\GG\colon \rho(g)\le r\}$   denote the open and closed ball around  $e$ of radius $r$, respectively. Further, let  $\tau_r=\inf\{n\ge0:S_n\in \mathscr{B}^c_\GG(r)\}$
\begin{proposition}\label{prop2.xx}
  Suppose $0<d<\infty$. There are $c,C>0$  such that
  \begin{enumerate}
    \item[(i)] $\displaystyle\Prob\bigl(\tau_r<n\bigr)\lesssim e^{-c r^2/n}$ for all $r\ge1$; 
    \item[(ii)] $\displaystyle p_n(g) \gtrsim  n^{-d/2}e^{-C\rho(g)^2/n}\mathbf{1}_{\mathscr{B}_\GG(cn)}(g)$ for all  
    $g\in\GG$;
    \item[(iii)] $\displaystyle p_n(g)\lesssim n^{-d/2}e^{-c\rho(g)^2/n}$ for all 
    $g\in\GG$.
  \end{enumerate}
\end{proposition}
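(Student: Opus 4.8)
The plan is to prove the three heat-kernel-type estimates in Proposition \ref{prop2.xx} by leveraging Proposition \ref{prop2.x} together with standard facts about random walks on groups of polynomial growth, which by the Bass–Gromov theorems are virtually nilpotent. The estimates (ii) and (iii) are precisely two-sided Gaussian (Aronson-type) bounds for the transition kernel $p_n$, and (i) is a quantitative ``escape-from-a-ball'' bound of Gaussian type. These are exactly the kind of results packaged in \cite{Varopoulos-Saloff-Coste-Coulhon} for random walks on groups of polynomial volume growth, so the overarching strategy is to either cite the appropriate theorems there or reconstruct them from the Nash/Davies machinery.

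First I would address the upper bound (iii). The on-diagonal bound $\sup_g p_n(g)\lesssim n^{-d/2}$ is already available from Proposition \ref{prop2.x}. To promote this to the Gaussian off-diagonal decay $p_n(g)\lesssim n^{-d/2}e^{-c\rho(g)^2/n}$, the standard route is Davies' method: one uses the on-diagonal bound, which is equivalent to a Nash inequality and hence to ultracontractivity of the walk, and then perturbs by the exponential weight $e^{\lambda\rho(g)}$. Since $\rho$ is $1$-Lipschitz with respect to the word metric and the step distribution has bounded support in $\Gamma$, the commutator estimates controlling $e^{-\lambda\rho}Pe^{\lambda\rho}$ produce a factor like $e^{C\lambda^2}$ per step; optimizing $\lambda$ over the constraint then yields the Gaussian factor $e^{-c\rho(g)^2/n}$ in the usual way. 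The lower bound (ii) is more delicate: the matching near-diagonal lower bound $p_{2n}(e)\gtrsim n^{-d/2}$ from Proposition \ref{prop2.x}, combined with a chaining/parabolic-Harnack argument (or directly the two-sided estimates of \cite[Theorem VI.5.1]{Varopoulos-Saloff-Coste-Coulhon}), gives the Gaussian lower bound inside the region $\rho(g)\lesssim n$, which is the content of the indicator $\mathbf{1}_{\mathscr{B}_\GG(cn)}(g)$.

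For part (i), the plan is to deduce the escape bound from the upper Gaussian estimate (iii) via a standard reflection/maximal-inequality argument. Writing $\tau_r$ as the first exit from the ball of radius $r$, one controls $\Prob(\tau_r<n)$ by summing the probability of being at distance at least $r$ at the exit time and using the strong Markov property together with the Gaussian tail of $p_k(g)$ for $\rho(g)\ge r$. A cleaner alternative, which I would probably prefer, is to bound the maximum displacement directly: since $\sup_{k\le n}\rho(S_k)\ge r$ on the event $\{\tau_r<n\}$, and $\rho(S_k)$ is dominated by a sum of bounded increments, an exponential (Azuma–Hoeffding or Doob-maximal) inequality applied to $e^{\lambda\rho(S_k)}$ yields $\Prob(\sup_{k\le n}\rho(S_k)\ge r)\lesssim e^{-cr^2/n}$ after optimizing in $\lambda$, which is exactly the claimed bound.

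The main obstacle I anticipate is the lower bound (ii), specifically establishing the Gaussian lower estimate with the correct Gaussian exponent uniformly over the ball $\mathscr{B}_\GG(cn)$. On-diagonal lower bounds are comparatively soft, but propagating them off-diagonal requires either a parabolic Harnack inequality (which holds on groups of polynomial growth but whose verification is nontrivial) or an explicit chaining argument that carefully tracks how the volume-doubling property enters; getting the constant $c$ in the radius threshold and the constant $C$ in the exponent to be consistent is where the care is needed. The upper bounds (i) and (iii), by contrast, follow from fairly routine applications of the exponential-weight method and maximal inequalities once the on-diagonal bound of Proposition \ref{prop2.x} is in hand. Since all three estimates are essentially restatements of results in \cite{Varopoulos-Saloff-Coste-Coulhon}, the cleanest proof may simply cite the relevant theorems there and indicate how (i) follows from (iii) by the maximal-inequality argument above.
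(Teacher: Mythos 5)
Your proposal is correct and matches the paper's approach: the paper's entire proof is a citation to the standard Gaussian heat-kernel literature for walks on groups of polynomial growth, namely \cite[Theorems 2.1 and 5.1]{Hebisch-Saloff-Coste-1993} for the two-sided bounds (ii) and (iii), and \cite[Theorem VI.5.1]{Varopoulos-Saloff-Coste-Coulhon} together with \cite[Proposition 4.33 and Theorem 5.23]{Barlow-Book-2017} for the exit-time bound (i), the latter being exactly the ``Gaussian upper bound implies escape estimate'' argument you sketch. Your Davies-method and Harnack/chaining outlines are precisely the content behind those references, so the cleanest-proof-by-citation route you settle on is what the paper does.
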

\begin{proof}
First assertion follows from \cite[Theorem VI.5.1]{Varopoulos-Saloff-Coste-Coulhon}, \cite[Theorem 2.1]{Hebisch-Saloff-Coste-1993} and \cite[Proposition 4.33 and Theorem 5.23]{Barlow-Book-2017}, while the second and third follow from \cite[Theorems 2.1 and 5.1]{Hebisch-Saloff-Coste-1993}.
\end{proof}

We now deal with the case $d=3$.

\begin{proposition}\label{Proposition 2.6}
 If   $d=3$, then
		\begin{align*}
		\E \left[\calC_n\right] \asymp n^{1/2}.
		\end{align*}
\end{proposition}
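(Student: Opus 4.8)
The lower bound $\E[\calC_n] \gtrsim n^{1/2}$ is already supplied by Proposition 2.4, so the plan is to establish the matching upper bound $\E[\calC_n] \lesssim n^{1/2}$. The starting point is the defining formula $\calC_n = \sum_{g \in \calR_n} \Prob_g(\tau_{\calR_n}^+ = \infty)$, which immediately gives the crude bound $\calC_n \le |\calR_n|$. This alone is far too weak, so the strategy is to control the capacity by a more refined argument that exploits recurrence-like behavior of the three-dimensional walk. The key heuristic is that in growth index $d=3$ the walk is transient but only weakly so, and the expected capacity should match the expected size of the range restricted by the Green function decay. The plan is to write $\E[\calC_n] = \sum_{k=0}^n \Prob(S_k \notin \calR_{k-1} \text{ and } S_k \notin \calR[k+1,n], \text{ in the capacity sense})$, or more precisely to use a last-visit / first-visit decomposition that expresses the capacity as a sum over points weighted by escape probabilities.

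Concretely, I would use the representation in which each point contributes its probability of never being revisited by the walk from $\calR_n$. The cleanest route is to bound
\[
\E[\calC_n] \le \sum_{k=0}^n \Prob\bigl(S_j \ne S_k \text{ for all } j \in \{0,\dots,n\} \setminus \{k\} \text{ after a suitable escape}\bigr),
\]
and then to compare this with $\sum_{k=0}^n \Prob_{S_k}(\tau^+ = \infty)$ summed appropriately. In practice the Green function estimates from Proposition \ref{prop2.x}, namely $\sup_{g} p_n(g) \lesssim n^{-3/2}$, are what drive the computation: one expects the probability that a given $S_k$ is a ``capacity point'' to decay like the probability of escape, which in $d=3$ is governed by the tail $\sum_{m \ge \ell} p_m(e) \asymp \ell^{-1/2}$. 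Summing such contributions over $k$ should yield exactly the $n^{1/2}$ order.

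The main obstacle will be converting the global, nonlocal quantity $\calC_n$ into a sum of manageable local escape probabilities, since capacity is genuinely a functional of the whole set $\calR_n$ and does not decompose additively. I would address this by a two-sided sandwiching argument: decompose the time interval $[0,n]$ into a first half $[0,n/2]$ and second half $[n/2,n]$, use subadditivity $\calC_n \le \calC_{n/2} + \Capa(\calR[n/2,n])$ to reduce to estimating contributions of single time-blocks, and then use the reflection/time-reversal symmetry of the walk together with the heat kernel bounds of Proposition \ref{prop2.xx} to control the escape probability $\Prob_{S_k}(\tau^+_{\calR_n} = \infty)$ by a quantity of order $\ell^{-1/2}$ where $\ell$ measures the distance in time to the nearest return. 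The Gaussian upper bound (iii) $p_n(g) \lesssim n^{-d/2} e^{-c\rho(g)^2/n}$ is precisely the tool needed to rule out contributions from points that are far from $e$ in word length, confining the effective sum to the ball $\mathscr{B}_\GG(c\sqrt{n})$ and making the elementary summations converge to the stated order $n^{1/2}$.
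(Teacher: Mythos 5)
Your identification of the lower bound (Proposition \ref{Proposition 2.4}) and of the general direction for the upper bound --- rewrite $\calC_n$ via a Jain--Orey-type representation as a sum over $k$ of escape probabilities --- matches the paper. The genuine gap is at the step you describe as being ``governed by the tail $\sum_{m \ge \ell} p_m(e) \asymp \ell^{-1/2}$''. The escape probability in that representation is not the probability that an independent walk avoids the single point $S_k$; it is the probability that an independent walk started at $S_k$ avoids the \emph{entire set} $\calR_n$. After recentering at $S_k$, the $k$-th summand becomes $\Prob\bigl(\calR^3[1,\infty)\cap(\calR^1_k\cup\calR^2_{n-k})=\emptyset\bigr)$ for three independent walks, and the whole proof reduces to the non-intersection estimate $\Prob\bigl(\calR^3[1,k]\cap(\calR^1_k\cup\calR^2_k)=\emptyset\bigr)\lesssim k^{-1/2}$. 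This is genuinely hard (it is the group analogue of \cite[Theorem 3.5.1]{Lawler-Book-2013}): the expected number of intersection times of $\calR^3[1,k]$ with $\calR^1_k$ \emph{diverges} like $k^{1/2}$ when $d=3$, so no first-moment computation of the kind you sketch can produce the bound. The paper imports Lawler's second-moment argument, whose one non-routine ingredient on a general group is the inverse-moment bound $\E[Z_n^{-1}]\lesssim n^{1/2}$ for $Z_n=\inf_{0\le i\le j\le n}\mathcal{G}_{(\lambda)}(S_j^{-1}S_i)$; proving this uses the Gaussian heat-kernel lower bound and the exit-time estimate of Proposition \ref{prop2.xx}, and is exactly where the paper must work around the failure of $\{\rho(S_n)\}_{n\ge0}$ to be a submartingale outside of $\ZZ^d$ (Remark \ref{dead-end}). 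None of this machinery appears in your outline.

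A secondary problem: the reduction you propose via subadditivity, $\calC_n\le\calC_{n/2}+\Capa(\calR[n/2,n])$, cannot yield a sublinear upper bound, since iterating it only gives $\E[\calC_n]\lesssim n$. The decomposition that works is the pointwise one above, in which each time $k$ contributes a three-walk non-intersection probability rather than the capacity of a time block.
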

\begin{proof}
  According to Proposition \ref{Proposition 2.4} it suffices to show that $\E \left[\calC_n\right]  \lesssim n^{1/2}$. We follow  \cite{Asselah_Zd} and \cite{Lawler-Book-2013}, where the case of the symmetric simple random walk on $\ZZ^d$  has been considered. However, we note that in \cite{Asselah_Zd} the case when $d=3$ was covered by invoking the fact that for the symmetric simple random walk on $\ZZ^d$, the process $\{\rho(S_n)\}_{n\geq 0}$ is a submartingale. Unfortunately, this is not true in general (see Remark~\ref{dead-end}). For this reason, our proof is somewhat more complicated and closer to the approach \cite{Asselah_Zd} used for $d=4$ case.
  
  Let $\{\widetilde S_n\}_{n\ge0}$ be an independent copy of $\{ S_n\}_{n\ge0}$, with distribution $\widetilde{\Prob}$.  Due to \cite[Section 2]{Jain-Orey} it holds that (with the convention $\calR_{-1} = \emptyset$) 
    $$\calC_n = \sum_{k=0}^n \widetilde\Prob_{S_k}\bigr( (S_k\,\widetilde\calR[1,\infty))\cap\calR_n=\emptyset \bigl) \mathbf{1}_{\{S_k\notin\calR_{k-1}\}},$$
  where $\{\widetilde\calR_n\}_{n\ge0}$ stands for the range process of $\{\widetilde S_n\}_{n\ge0}$. Next, for fixed $0\le k\le n$ we consider three independent random walks $\{S^1_i\}_{0\le i\le k}$, $\{S^2_i\}_{0\le i\le n-k}$ and $\{S^3_i\}_{i\ge 0}=\{\widetilde S_i\}_{i\ge 0}$, where $S_i^1=S_{k}^{-1} S_{k-i}$ and $S_i^2=S_k^{-1} S_{k+i}$. We now have
    $$\calC_n = \sum_{k=0}^n \widetilde\Prob\bigr(\calR^3[1,\infty)\cap(\calR^1_k\cup\calR^2_{n-k})=\emptyset\bigl)\mathbf{1}_{\{e\notin\calR^{1}[1,k]\}},$$
  where $\{\calR^1_i\}_{0\le i\le k}$, $\{\calR^2_i\}_{0\le i\le n-k}$ and $\{\calR^3_n\}_{n\ge0}$ stand for the range processes of \linebreak $\{S^1_i\}_{0\le i\le k}$, $\{S^2_i\}_{0\le i\le n-k}$ and $\{S^3_i\}_{i\ge 0}$, respectively. By taking expectations, we conclude 
  \begin{align*}
    \E \left[\calC_n\right]  &= \sum_{k=0}^n\Prob\bigr(e\notin\calR^{1}[1,k],\,\calR^3[1,\infty)\cap(\calR^1_k\cup\calR^2_{n-k})=\emptyset\bigl)\\
    &\le 2\sum_{k=0}^{n/2}\Prob\bigr(\calR^3[1,\infty)\cap(\calR^1_k\cup\calR^2_{n-k})=\emptyset\bigl)\\
    &\le 2\sum_{k=0}^{n/2}\Prob\bigr(\calR^3[1,k]\cap(\calR^1_k\cup\calR^2_k)=\emptyset\bigl).
  \end{align*}
  Hence, the desired bound will follow if we show that 
     \begin{equation}\label{eqF}\Prob\bigr(\calR^3[1,k]\cap(\calR^1_{k}\cup\calR^2_{k})=\emptyset\bigl) \lesssim k^{-1/2}.\end{equation}
		
  To prove this, we follow the   proof for the upper bound  in \cite[Theorem 3.5.1]{Lawler-Book-2013} (in the case of the symmetric simple random walk on $\ZZ^3$), given in \cite[Section 3.6]{Lawler-Book-2013}. Observe that the probability on the left-hand side in \eqref{eqF} corresponds to the function $F(k)$ in \cite[Theorem 3.5.1]{Lawler-Book-2013}. 
 A straightforward inspection shows that all the arguments in the proofs of \cite[Theorem 6.1 and Proposition 3.6.3]{Lawler-Book-2013} and the first half (up to relation (3.26)) of the proof of \cite[Proposition 3.6.2]{Lawler-Book-2013} hold also for finitely generated groups with symmetric set of generators and $d=3$. Hence, the only part that has to be clarified is the   relation (3.26), i.e.\ that 
  \begin{equation}\label{ocekivanje-Zn}
    \E[Z_n^{-1}] \lesssim n^{1/2}
  \end{equation}
  where $Z_n$ is defined in the following way.
 Let $\lambda \in[\frac12,1)$ and $n=\lfloor (1-\lambda)^{-1} \rfloor$, and let $\Lambda$ be a geometric random variable with parameter $1-\lambda$, independent of the random walk $\{S_n\}_{n\ge0}$. Set
    $$\mathcal{G}_{(\lambda)}(g)=\E\left[\sum_{k=0}^{\Lambda}\mathbf{1}_{\{S_k=g\}}\right] \qquad\text{and}\qquad Z_n=\inf_{0\le i\le j\le n}\mathcal{G}_{(\lambda)}(S_j^{-1}S_i).$$
 We now prove \eqref{ocekivanje-Zn}. Let $c,C>0$ be as in the second part of Proposition \ref{prop2.xx}, and notice that for $g\in\GG$ such that $\rho(g)>32/c^2$ we have 
    $$\sum_{k=c\rho(g)^2/32}^{c\rho(g)^2/16}p_k(g) \gtrsim \sum_{k=c\rho(g)^2/32}^{c\rho(g)^2/16} k^{-3/2}e^{-C\rho(g)^2/k} \gtrsim \rho(g)^{-1}.$$
  This provides us, for such $g$, the following lower bound for $\mathcal{G}_{(\lambda)}(g)$:
    $$\mathcal{G}_{(\lambda)}(g)\ge \Prob(\Lambda\ge c\rho(g)^2/16)\sum_{k=0}^{c\rho(g)^2/16}p_k(g)\gtrsim e^{-c\rho(g)^2(1-\lambda)/8} \rho(g)^{-1},$$
where we used the elementary inequality $\lambda\ge e^{-2(1-\lambda)}$ (recall that $\lambda\in[\frac{1}{2},1]$). Clearly, since the set $\{g\in\GG:0<\rho(g)\le 32/c^2\}$ is finite and $\mathcal{G}_{(\lambda)}(g) \geq \mathcal{G}_{(\frac12)}(g)>0$, we conclude that
    $$\mathcal{G}_{(\lambda)}(g)\gtrsim \rho(g)^{-1}e^{-c\rho(g)^2(1-\lambda)/8}$$
  for all $g\in\GG\setminus\{e\}$ and $\lambda\in[\frac12,1]$.
  
  Now, as in \cite[Proposition 3.6.2]{Lawler-Book-2013} let
    $$R_n=\sup_{0\le i\le j\le n}\rho(S_j^{-1}S_i)$$
  and note that
    $$Z_n^{-1}\lesssim R_ne^{cR_n^2/(8n)}=f(R_n),$$
  where $f(u)=ue^{cu^2/(8n)}$. Since $\Prob(R_n \geq r)\le \Prob(\sup_{0\le i\le n}\rho(S_i) \geq r/2)$ and $R_n \le n$, we have that
  \begin{align*}
    \E[Z_n^{-1}] &\lesssim \int_0^n f'(r)\Prob(R_n \geq r)\, dr\\&\lesssim \sum_{k=1}^ne^{c k^2/(8n)}\left(1+\frac{ck^2}{4n}\right)\Prob\bigl(\sup_{0\le i\le n}\rho(S_i) \geq k/2\bigr).
  \end{align*}
  Further, observe that 
    $$\Prob\bigl(\sup_{0\le i\le n}\rho(S_i)\geq k/2\bigr)= \Prob\bigl(\tau_{k/2}<n+1\bigr).$$
  Thus, first part of Proposition~\ref{prop2.xx} implies that
  \begin{align*}
    \E[Z_n^{-1}]&\lesssim \sum_{k=1}^ne^{c k^2/(8n)}\left(1+\frac{ck^2}{4n}\right)e^{-ck^2/(4n)}\\
    &= \sum_{k=1}^ne^{-c k^2/(8n)}\left(1+\frac{ck^2}{4n}\right) \lesssim n^{1/2},
  \end{align*}
  which concludes the proof.
\end{proof}

\begin{remark}\label{dead-end}
  As we mentioned in the previous proof, $\{\rho(S_n)\}_{n\geq 0}$ is a submartingale in the case of the symmetric simple random walk on $\ZZ^d$. This is not necessarily the case in general.   For example, one might even have that $\rho(g\gamma)\leq \rho(g)$ for some $g\in\GG$ and all $\gamma\in\Gamma$, with strict inequality occurring for at least one $\gamma$. This rules out the possibility of $\{\rho(S_n)\}_{n\geq 0}$ being a submartingale since the submartingale inequality $\E[\rho(S_{n+1}) | X_1,\dots,X_n]\geq \rho(S_n)$ would obviously not be satisfied on the event $\{S_n=g\}$. The simplest example we are aware of that demonstrates this phenomenon is provided by the group $\ZZ$ with the set of generators $\{\pm 2,\pm 3\}$ and $g=1$.
\end{remark}

We now consider the case $d=4$.  
For $k\in\N$, let
$$V_k=(\mathscr{B}_\GG(2^{k})\setminus\mathscr{B}_\GG(2^{k-1}))\cap \calR_{\tau_{2^{k+1}}}.$$

\begin{proposition}\label{lm:harnack}
  Let $\{\widetilde{S}_n\}_{n\ge0}$ be an independent copy of $\{S_n\}_{n\ge0}$, and let $E_k=\{V_k\cap \widetilde{V}_k\neq\emptyset\}$, where $\widetilde V_k$ is defined as $V_k$ but in terms of $\{\widetilde{S}_n\}_{n\ge0}$. There is $c>0$ such that for all $\ell\in\N$ it holds that \begin{equation}\label{harnack}
    \sup_{(g_1,g_2)\in \overline{\mathscr{B}}_\GG(2^{4\ell-3})\times \overline{\mathscr{B}}_\GG(2^{4\ell-3})}\Prob_{(g_1,g_2)}(E_{4\ell})\le c\,\inf_{(g_1,g_2)\in \overline{\mathscr{B}}_\GG(2^{4\ell-3})\times \overline{\mathscr{B}}_\GG(2^{4\ell-3})}\Prob_{(g_1,g_2)}(E_{4\ell}).
  \end{equation} 
\end{proposition}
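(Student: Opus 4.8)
The plan is to recognise \eqref{harnack} as a scale-invariant elliptic Harnack inequality for the \emph{joint} walk. Since $\{\widetilde S_n\}_{n\ge0}$ is an independent copy of $\{S_n\}_{n\ge0}$, the pair $(S_n,\widetilde S_n)_{n\ge0}$ is exactly the symmetric random walk on $\GG\times\GG$ driven by the uniform distribution on the finite symmetric generating set $\Gamma\times\Gamma$. As $\GG\times\GG$ is again of polynomial growth, with growth index $2d$, Proposition \ref{prop2.xx} applies to it verbatim, and the two-sided Gaussian bounds it provides yield the scale-invariant elliptic Harnack inequality for this walk (see \cite{Barlow-Book-2017}). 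We shall freely use the elementary comparisons $\mathscr B_{\GG\times\GG}(r)\subseteq\mathscr B_\GG(r)\times\mathscr B_\GG(r)$ and $\overline{\mathscr B}_\GG(r)\times\overline{\mathscr B}_\GG(r)\subseteq\overline{\mathscr B}_{\GG\times\GG}(r+1)$, which show that products of $\GG$-balls and $(\GG\times\GG)$-balls coincide up to an additive constant.

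Set $u(g_1,g_2)=\Prob_{(g_1,g_2)}(E_{4\ell})$, a $[0,1]$-valued function on $\GG\times\GG$; the inequality \eqref{harnack} asserts precisely that $u$ varies by at most a fixed multiplicative factor over $\overline{\mathscr B}_\GG(2^{4\ell-3})\times\overline{\mathscr B}_\GG(2^{4\ell-3})$. The crucial observation is that $u$ is harmonic for the joint walk on a much larger set. Indeed, $V_{4\ell}$ and $\widetilde V_{4\ell}$ consist only of positions of radius at least $2^{4\ell-1}$ (collected no later than the first exit from $\mathscr B_\GG(2^{4\ell+1})$). Hence, whenever $\rho(g_1),\rho(g_2)<2^{4\ell-1}$, the starting points $g_1$ and $g_2$ lie outside both annuli, so removing them from the two ranges leaves $V_{4\ell}$, $\widetilde V_{4\ell}$, and therefore the event $E_{4\ell}$, unchanged; the strong Markov property applied at time $1$ then yields the mean-value identity $u(g_1,g_2)=\E_{(g_1,g_2)}[u(S_1,\widetilde S_1)]$. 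Thus $u$ is a nonnegative bounded harmonic function for the joint walk on $\mathscr B_\GG(2^{4\ell-1})\times\mathscr B_\GG(2^{4\ell-1})$, hence on a $(\GG\times\GG)$-ball about $(e,e)$ of radius of order $2^{4\ell-1}$.

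Note the separation of scales built into the statement: the starting region has radius $2^{4\ell-3}$, whereas $u$ is harmonic out to radius of order $2^{4\ell-1}$, a ratio that is the same for every $\ell$. It then remains to apply the elliptic Harnack inequality to the nonnegative harmonic function $u$ on the $(\GG\times\GG)$-ball $\mathscr B_{\GG\times\GG}(2^{4\ell-2}+2)$ about $(e,e)$, which for every $\ell\ge1$ is contained in $\mathscr B_\GG(2^{4\ell-1})\times\mathscr B_\GG(2^{4\ell-1})$, so that $u$ is indeed harmonic there. The inequality furnishes a constant $c>0$, depending only on $\GG$, such that $\sup u\le c\inf u$ over the concentric ball $\overline{\mathscr B}_{\GG\times\GG}(2^{4\ell-3}+1)$; since this ball contains $\overline{\mathscr B}_\GG(2^{4\ell-3})\times\overline{\mathscr B}_\GG(2^{4\ell-3})$, we obtain \eqref{harnack}. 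The constant $c$ is independent of $\ell$ because both the ambient group and the ratio of the two radii involved are the same for every $\ell$.

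The point requiring the most care is the scale invariance of the elliptic Harnack inequality, i.e.\ that the comparison constant can be taken independent of the radius, and hence of $\ell$; this is exactly where one uses the full two-sided Gaussian estimates of Proposition \ref{prop2.xx} (equivalently, volume doubling together with a Poincar\'e inequality) rather than heat-kernel control at a single scale. A second, minor, point is that when the Cayley graph of $(\GG,\Gamma)$ is bipartite (as for the simple walk on $\ZZ^d$), the graph of $(\GG\times\GG,\Gamma\times\Gamma)$ is disconnected into two parity classes; one then applies the preceding argument on the connected component of $(e,e)$---a finite-index, and hence polynomial-growth, subgroup of $\GG\times\GG$---and passes between the two classes by a single step via the mean-value identity, which alters the constant by at most a bounded factor. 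The remaining ingredients, namely the harmonicity of $u$ with the stated separation of scales and the routine $O(1)$ comparisons between $(\GG\times\GG)$-balls and products of $\GG$-balls, are elementary.
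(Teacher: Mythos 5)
Your proposal is correct and follows essentially the same route as the paper: view $(S_n,\widetilde S_n)$ as a walk on the product group, check that $(g_1,g_2)\mapsto\Prob_{(g_1,g_2)}(E_{4\ell})$ is harmonic on a ball whose radius exceeds $2^{4\ell-3}$ by a fixed factor, and invoke the scale-invariant elliptic Harnack inequality (via the Gaussian bounds of Proposition \ref{prop2.xx} transferred to growth index $2d$). The only difference is one of emphasis: what you defer as a ``minor point'' -- that $\Gamma\times\Gamma$ may generate only a proper subgroup $\mathsf H\le\GG\times\GG$ -- is where the paper spends most of its effort, proving $|\GG\times\GG:\mathsf H|\le 2$ and the comparison $\max(\rho(g),\rho(h))\le\rho_{\mathsf H}(g,h)\le\max(\rho(g),\rho(h))+m$ (so your additive constant is a group-dependent $m$ rather than $1$), but your sketch of how to handle it is sound.
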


\begin{proof}
Notice that we may assume that $\ell$ is large enough when needed. The idea is to use the elliptic Harnack inequality given in \cite[Theorem 7.18]{Barlow-Book-2017}, and for that goal we first need to set the stage by making a couple of group theoretic arguments.

Let $\mathsf{H}=\langle\Gamma\times\Gamma\rangle$.  Note that $\mathsf{H}$ can be a proper subgroup of $\GG\times\GG$. For example, if $\GG=\ZZ$ and $\Gamma=\{-1,1\}$, then $\mathsf{H}=\langle\Gamma\times\Gamma\rangle$ consists of elements of $\ZZ^2$ with coordinates of the same parity. This is actually typical -- it is easy to prove that either $\mathsf{H}=\GG\times\GG$ or $|\GG\times\GG : \mathsf{H}|=2$, i.e.\ in any case $\GG\times\GG$ can be covered by at most two translates of $\mathsf{H}$. Indeed, fix $\gamma\in\Gamma$ and take $(g,h)\in\GG\times\GG$. Let 
  $$g=g_1\dots g_r, \quad\text{and}\quad \gamma^{-r}h = h_1\dots h_s, \qquad g_1,\dots,g_r,h_1,\dots,h_s\in\Gamma.$$
Consider the following product of $r+s$ elements from $\Gamma\times\Gamma$,
  $$(g_1,\gamma)\dots(g_r,\gamma)  (\gamma,h_1)(\gamma^{-1},h_2)(\gamma,h_3)\dots(\gamma^{(-1)^{s+1}},h_s)\in\mathsf{H}.$$
This product, depending on the parity of $s$, will be either $(g,h)$ or $(g\gamma,h)$. Thus, $\GG\times\GG = \mathsf{H}\cup(\mathsf{H}\cdot(\gamma^{-1},e))$.

The fact that $|\GG\times\GG : \mathsf{H}|<\infty$ implies that $d_{\mathsf{H}}=d_{\GG\times\GG}$ (see \cite[Proposition 8.78]{drutu-kapovich}), where $d_{\mathsf{H}}$ and $d_{\GG\times\GG}$ denote the growth indices of $\mathsf{H}$ and $\GG\times\GG$, respectively. We now show that $d_{\GG\times\GG}=2d$, i.e.\ $d_\mathsf{H}=2d.$ Namely, as we have already commented, the growth index does not depend on the choice of the set of generators. Hence, we can
     consider the set  $\{(\gamma,e), (e,\gamma) \colon \gamma\in\Gamma\}$ of  $2|\Gamma|$ generators. It is straightforward to check that $\rho_{\GG\times\GG}(g,h) = \rho_{\GG}(g) + \rho_{\GG}(h)$. This in particular implies that for every $r>0$, $\mathscr{B}_{\GG}(r)\times \mathscr{B}_{\GG}(r)\subseteq\mathscr{B}_{\GG\times\GG}(2r)$ and  $\mathscr{B}_{\GG\times\GG}(r)\subseteq\mathscr{B}_{\GG}(r)\times \mathscr{B}_{\GG}(r)$, which proves the assertion. 
     
We next analyse the function $\rho_\mathsf{H}$, the word metric in $\mathsf{H}$. We show that there is $m\in\N$ such that 
\begin{equation}\label{metrike}
           \max(\rho(g),\rho(h)) + m\geq \rho_\mathsf{H}((g,h)) \geq \max(\rho(g),\rho(h))  
         \end{equation}
for all $(g,h)\in\mathsf{H}$. Since the second inequality is trivial, we concentrate on the first. 
Without loss of generality assume $\rho(g)\leq \rho(h)$, and let the corresponding shortest words be
  $$g=g_1\dots g_s\quad\text{and}\quad h=h_1\dots h_r,\qquad g_1,\dots,g_s,h_1,\dots,h_r\in\Gamma.$$
Let $\gamma\in\Gamma$ be arbitrary, and consider the product
  $$(g_1,h_1)(g_2,h_2)\dots (g_s,h_s) (\gamma,h_{s+1})(\gamma^{-1},h_{s+2})(\gamma,h_{s+3})\dots (\gamma^{(-1)^{r-s+1}},h_r) \in\mathsf{H}.$$
This product is either $(g,h)$ or $(g\gamma,h)$. In the latter case, write $(\gamma^{-1},e)$ as a product of some elements (say, $m$ of them) from $\Gamma\times\Gamma$ (this is possible since $(\gamma^{-1},e) = (g\gamma,h)^{-1}(g,h)\in\mathsf{H}$) and concatenate  it with the above product. This gives a representation of $(g,h)$ as a product of at most $r+m$ elements from $\Gamma\times\Gamma$, hence  \eqref{metrike}.

Fix some $\ell\in\N$, and define $f_\ell:\mathscr{B}_\GG(2^{4\ell-1})\times \mathscr{B}_\GG(2^{4\ell-1})\to[0,1]$ by
  $$f_\ell(g_1,g_2)=\Prob_{(g_1,g_2)}(E_{4\ell}).$$
From \eqref{metrike} we have that $\overline{\mathscr{B}}_\GG(2^{4\ell-3})\times\overline{\mathscr{B}}_\GG(2^{4\ell-3})\subseteq\mathscr{B}_{\mathsf{H}}(2^{4\ell-3}+m+1)$. We aim to apply the elliptic Harnack inequality for elements from this ball, and for this we need $f_\ell$ to be harmonic on $\mathscr{B}_{\mathsf{H}}(2^{4\ell-2}+2m+2)$. However,  \eqref{metrike} implies also that $\mathscr{B}_{\mathsf{H}}(2^{4\ell-2}+2m+2)\subseteq\mathscr{B}_{\GG}(2^{4\ell-2}+2m+2)\times \mathscr{B}_{\GG}(2^{4\ell-2}+2m+2)$. For $\ell$ large enough the latter set is contained in $\mathscr{B}_\GG(2^{4\ell-1})\times\mathscr{B}_\GG(2^{4\ell-1})$, and the Markov property easily shows that $f_\ell$ is harmonic on this set.
 
Finally, it is clear that $\{(S_n,\widetilde S_n)\}_{n\ge0}$ is a random walk on $\mathsf{H}$ with transition probabilities $$p^\mathsf{H}_n((g_1,g_2),(h_1,h_2))=p_n(g_1,h_1)p_n(g_2,h_2).$$ 
Using the fact that $d_\mathsf{H}=2d=8$, and employing  \cite[Corollary 7.17 and Theorem 7.18]{Barlow-Book-2017} together with Proposition \ref{prop2.xx}  for function $f_\ell$ gives the inequality \eqref{harnack}.
\end{proof}

We now conclude the following.

\begin{proposition}\label{Proposition 2.8}
 If $d=4$, then
  \begin{align*}
    \E \left[\calC_n\right] =\mathsf{o}(n).
  \end{align*}
\end{proposition}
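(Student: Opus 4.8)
The plan is to combine the decomposition used in the proof of Proposition~\ref{Proposition 2.6} with a second-moment estimate for the intersection events $E_{4\ell}$ and the elliptic Harnack inequality of Proposition~\ref{lm:harnack}. Exactly as in the case $d=3$, the Jain--Orey representation together with the splitting of the walk into its reversed past $\{S^1_i\}$, its future $\{S^2_i\}$, and an independent copy $\{S^3_i\}$ reduces the claim to an upper bound on a non-intersection probability: with $q_k=\Prob(\calR^3[1,k]\cap(\calR^1_k\cup\calR^2_k)=\emptyset)$ one has $\E[\calC_n]\lesssim\sum_{k=0}^{n/2}q_k$. By Ces\`aro summation it then suffices to prove $q_k\to0$ as $k\to\infty$; in contrast to the case $d=3$, I do not aim for the sharp rate $q_k\asymp1/\log k$ (which would sharpen the conclusion to $\E[\calC_n]\asymp n/\log n$), since the mere decay $q_k\to0$ already gives $\E[\calC_n]=\mathsf{o}(n)$.

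First I would pass from $q_k$ to the scale events. Dropping $\calR^1_k$ gives $q_k\le\Prob(\calR^3[1,k]\cap\calR^2_k=\emptyset)$, which places us in the two-walk setting of $E_{4\ell}$ with $S=S^2$ and $\widetilde S=S^3$. If these two walks do not intersect up to time $k$, then $V_{4\ell}\cap\widetilde V_{4\ell}=\emptyset$ for every scale $4\ell$ that both walks have traversed by time $k$. Choosing $L=L_k\to\infty$ slowly enough that $2^{4L_k}\ll\sqrt k$, the event that both walks exit $\mathscr{B}_\GG(2^{4L_k+1})$ before time $k$ has probability tending to $1$ (a diffusive lower bound implicit in the Gaussian estimates of Proposition~\ref{prop2.xx}), so that
\begin{equation*}
  q_k\le\Prob\Big(\bigcap_{\ell=1}^{L_k}E_{4\ell}^c\Big)+\mathsf{o}(1).
\end{equation*}

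The quantitative heart is a first- and second-moment computation for $|V_{4\ell}\cap\widetilde V_{4\ell}|$. Using the Green-function asymptotics $\mathcal{G}(g)\asymp\rho(g)^{2-d}=\rho(g)^{-2}$, which follow by summing the Gaussian heat-kernel bounds of Proposition~\ref{prop2.xx}, together with the growth estimate $|\mathscr{B}_\GG(2^k)\setminus\mathscr{B}_\GG(2^{k-1})|\asymp2^{dk}=2^{4k}$, I would show
\begin{equation*}
  \E\big[|V_{4\ell}\cap\widetilde V_{4\ell}|\big]\asymp1\qquad\text{and}\qquad\E\big[|V_{4\ell}\cap\widetilde V_{4\ell}|^2\big]\lesssim\ell,
\end{equation*}
the extra factor $\ell$ in the second moment arising from the borderline divergence $\sum_{r=1}^{2^{4\ell}}r^{d-1}r^{-4}\asymp\ell$ characteristic of $d=4$. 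By the Paley--Zygmund inequality this yields $\Prob(E_{4\ell})\gtrsim1/\ell$, and then Proposition~\ref{lm:harnack} upgrades this to the uniform lower bound $\Prob_{(g_1,g_2)}(E_{4\ell})\gtrsim1/\ell$ for all starting configurations $(g_1,g_2)\in\overline{\mathscr{B}}_\GG(2^{4\ell-3})\times\overline{\mathscr{B}}_\GG(2^{4\ell-3})$.

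Finally I would peel off the scales one at a time. Applying the strong Markov property of the product walk $\{(S^2_n,S^3_n)\}$ at the stopping time at which both coordinates have left $\mathscr{B}_\GG(2^{4\ell-3})$ — at which moment the configuration lies in the ball where the Harnack lower bound applies, while the events $E_{4j}^c$ for $j<\ell$ are already determined — the goal is to establish
\begin{equation*}
  \Prob\Big(\bigcap_{j=1}^{\ell}E_{4j}^c\Big)\le\Big(1-\frac{c}{\ell}\Big)\,\Prob\Big(\bigcap_{j=1}^{\ell-1}E_{4j}^c\Big),
\end{equation*}
whence $\Prob(\bigcap_{\ell=1}^{L_k}E_{4\ell}^c)\le\prod_{\ell=1}^{L_k}(1-c/\ell)\asymp L_k^{-c}\to0$; combined with the two previous displays this gives $q_k\to0$ and hence $\E[\calC_n]=\mathsf{o}(n)$. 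I expect this last step to be the main obstacle: since the two independent walks traverse the scales on entangled time schedules, $E_{4\ell}$ is not literally a function of the product walk \emph{after} it leaves the inner ball, and one must exploit the factor-$16$ separation between consecutive scales (the very reason $E_{4\ell}$ is recorded only along the sparse sequence $4\ell$) to decouple the excursions responsible for distinct $E_{4\ell}$ up to the multiplicative constants furnished by Proposition~\ref{lm:harnack}. Verifying the second-moment bound $\E[|V_{4\ell}\cap\widetilde V_{4\ell}|^2]\lesssim\ell$ uniformly over starting points, in the group setting rather than on $\ZZ^4$, is the other point demanding care.
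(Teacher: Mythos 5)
Your proposal shares all of the heavy machinery with the paper's proof --- the Jain--Orey reduction to a non-intersection probability, the annular sets $V_{4\ell}$, the first- and second-moment bounds $\E[|V_{4\ell}\cap\widetilde V_{4\ell}|]\gtrsim1$ and $\E[|V_{4\ell}\cap\widetilde V_{4\ell}|^2]\lesssim\ell$ giving $\Prob(E_{4\ell})\gtrsim1/\ell$, and the Harnack inequality of Proposition~\ref{lm:harnack} --- but the endgame is genuinely different. The paper uses the Harnack inequality in the direction $\Prob_{(g,h)}(E_{4\ell})\le c\,\Prob_{(e,e)}(E_{4\ell})$ to establish the pairwise quasi-independence $\Prob(E_{4k}\cap E_{4\ell})\le c\,\Prob(E_{4k})\Prob(E_{4\ell})$, feeds this together with $\sum_k\Prob(E_{4k})=\infty$ into the Borel--Cantelli variant \cite[Corollary A.6.2]{Lawler-Limic-Book-2010}, and concludes that two independent infinite ranges intersect almost surely (indeed that one range is a.s.\ a recurrent set for the other walk); this yields $\mathsf{o}(n)$ with no rate. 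You use the Harnack inequality in the opposite direction, $\inf\ge c^{-1}\sup\ge c^{-1}\Prob_{(e,e)}(E_{4\ell})\gtrsim1/\ell$, to peel off scales multiplicatively and obtain the quantitative decay $\Prob\bigl(\bigcap_{\ell\le L}E_{4\ell}^c\bigr)\lesssim L^{-c}$, hence $\E[\calC_n]\lesssim n(\log n)^{-c}$, which is strictly stronger than the stated proposition (though still short of the conjectured sharp order $n/\log n$). Both routes are sound.

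Two points in your sketch deserve confirmation rather than worry. First, the decoupling obstacle you flag resolves cleanly: condition on the two trajectories separately, each up to its own exit time of $\mathscr{B}_\GG(2^{4\ell-3})$. The events $E_{4j}$, $j<\ell$, are measurable with respect to this data (since $V_{4j}$ involves only the range up to the exit of $\mathscr{B}_\GG(2^{4j+1})$ and $4j+1\le4\ell-3$), while $V_{4\ell}$ receives no contribution from the pre-exit portions of either walk (these stay inside $\mathscr{B}_\GG(2^{4\ell-3})\subseteq\mathscr{B}_\GG(2^{4\ell-1})$ and so miss the annulus); thus $E_{4\ell}$ coincides with the corresponding event for the two restarted walks, whose starting points lie on the sphere of radius $2^{4\ell-3}$ and hence in the ball where Proposition~\ref{lm:harnack} applies. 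In particular you do not need the moment bounds uniformly in the starting point --- only at $(e,e)$ --- precisely because Harnack transports the lower bound. Second, your truncation step needs $\Prob(\tau_r>k)\to0$ for $r\ll\sqrt k$; Proposition~\ref{prop2.xx}(i) bounds the wrong tail, but $\Prob(\tau_r>k)\le\Prob(S_k\in\mathscr{B}_\GG(r))\le\mathcal{V}(r)\sup_g p_k(g)\lesssim(r/\sqrt{k})^4$ from Proposition~\ref{prop2.x} does the job.
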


\begin{proof}
  As in Proposition \ref{Proposition 2.6}, we have that
  \begin{align*}
    \E \left[\calC_n\right]  &= \sum_{k=0}^n\Prob\bigr(e\notin\calR^{1}[1,k],\,\calR^3[1,\infty)\cap(\calR^1_k\cup\calR^2_{n-k})=\emptyset\bigl)\\
    &\le \sum_{k=0}^n\Prob\bigr(\calR^3[1,\infty)\cap\calR^1_k=\emptyset\bigl).
  \end{align*}
  Since
    $$\lim_{n\to\infty}\Prob\bigr(\calR^3[1,\infty)\cap\calR^1_{n}=\emptyset\bigl)=\Prob\bigr(\calR^3[1,\infty)\cap\calR^1_{\infty}=\emptyset\bigl),$$
  the claim would easily follow if we prove that the probability on the right hand side is equal to $0$.
We will show a somewhat stronger claim -- that $\calR^1_{\infty}$ is almost surely a recurrent set for $\{S_n^3\}_{n\geq 0}$, i.e.\ that
\begin{equation}\label{rec}
  \Prob(S^3_n\in\calR^1_{\infty}\ \text{for infinitely many } n) = 1.
\end{equation}
For $k\in\N$, let \begin{align*}
    \tau_{2^k}^1&=\inf\{n\ge0:S^1_n\in\mathscr{B}^c_{\mathsf{G}}(2^k)\},\\
    \tau_{2^k}^2&=\inf\{n\ge0:S^2_n\in\mathscr{B}^c_{\mathsf{G}}(2^k)\},\\
    V_k^1&=(\mathscr{B}_\GG(2^{k})\setminus\mathscr{B}_\GG(2^{k-1}))\cap \calR^1_{\tau^1_{2^{k+1}}},\\
     V_k^2&=(\mathscr{B}_\GG(2^{k})\setminus\mathscr{B}_\GG(2^{k-1}))\cap \calR^2_{\tau^2_{2^{k+1}}},\\
     E_k&=\{V_k\cap \widetilde{V}_k\neq\emptyset\}, 
    \end{align*} where $\widetilde V_k$ is defined as $V_k$ (see Proposition \ref{lm:harnack}) but in terms of an independent copy $\{\widetilde{S}_n\}_{n\ge0}$ of $\{S_n\}_{n\ge0}$.
By completely the same reasoning as in \cite[Theorem 6.5.10]{Lawler-Limic-Book-2010}, \eqref{rec} will follow if we show that $\Prob(E_k\ \text{i.o.})>0$. According to  \cite[Corollary A.6.2]{Lawler-Limic-Book-2010}, it suffices to show that
\begin{itemize}
    \item [(i)] $\displaystyle\sum_{k=1}^\infty\Prob(E_{4k})=\infty;$
    \item[(ii)] there is $c>0$ such that $\Prob(E_{4k}\cap E_{4\ell}) \leq c\, \Prob(E_{4k})\Prob(E_{4\ell})$ for all $k,\ell\in\N$.
\end{itemize}

The relation in (i) will follow easily once we prove $\Prob(E_{4k})\gtrsim 1/k$ using the second moment method. For $k\in\N$ set $\mathscr{A}_\GG(k)=\mathscr{B}_\GG(2^{k})\setminus\mathscr{B}_\GG(2^{k-1}).$
Expressing $|V_k^1\cap V_k^3|$ as a sum of indicator functions and taking expectation gives
\begin{equation}\label{eq:Ecardsum}\begin{aligned}
  \E\left[|V_k^1\cap V_k^3|\right] = \sum_{g\in\mathscr{A}_\GG(k)}\Prob(S^1_n=g\ &\text{for some } n<\tau^1_{2^{k+1}}) \\&\Prob(S^3_n=g\ \text{for some } n<\tau^3_{2^{k+1}}).
\end{aligned}\end{equation}
According to \cite[Theorem 1.31]{Barlow-Book-2017}, for all $k\in\N$ and $g\neq e$, we have
  $$\Prob\bigl(S^1_n=g\ \text{for some } n<\tau^1_{2^{k+1}}\bigr)=c(k)\sum_{n=0}^\infty\Prob\bigl(S_n^1=g,\ n<\tau_{2^{k+1}}^1\bigr),$$
where 
  $$c(k)^{-1} = \sum_{n=0}^\infty\Prob_g(S_n^1=g,\ n<\tau_{2^{k+1}}^1) \leq \sum_{n=0}^\infty\Prob_g(S_n^1=g) = \mathcal{G}(e).$$
On the other hand, due  to Proposition \ref{prop2.xx} and \cite[Theorems 4.25 and 4.26]{Barlow-Book-2017}, we obtain
  $$\Prob\bigl(S^1_n=g\ \text{for some } n<\tau^1_{2^{k+1}}\bigr)\ge \mathcal{G}(e)^{-1}\sum_{n=0}^\infty\Prob\bigl(S_n^1=g,\ n<\tau_{2^{k+1}}^1\bigr)\gtrsim \rho(g)^{-2}.$$
Plugging this into \eqref{eq:Ecardsum}, and using 
that $\rho(g)\le 2^{k}$ on $\mathscr{A}_\GG(k)$ and $|\mathscr{A}_\GG(k)|\asymp 2^{4k}$ (recall that $d=4$),
we now have
  $$\E\left[|V_k^1\cap V_k^3|\right] \gtrsim \sum_{g\in\mathscr{A}_\GG(k)}2^{-4k}\asymp 1,$$
and this completes the first part of the second moment method, i.e.\ proving that the appropriate first moment is large. It remains to prove that the second moment is small. We have that
\begin{equation}\begin{aligned}\label{eq:bound57}
  \E \left[|V_k^1\cap V_k^3|^2\right] = \sum_{g_1,g_2\in\mathscr{A}_\GG(k)}&\Prob(S^1_n=g_1\ \text{and}\ S^1_m=g_2\ \text{for some } n,m<\tau^1_{2^{k+1}})\\
  &\Prob(S^3_n=g_1\ \text{and}\ S^3_m=g_2\ \text{for some } n,m<\tau^3_{2^{k+1}}).
\end{aligned}\end{equation}
Notice that
\begin{equation}
\begin{aligned}
\label{eq:niz}
  &\Prob(S^1_n=g_1\ \text{and}\ S^1_m=g_2\ \text{for some } n,m<\tau^1_{2^{k+1}})\\
  &\quad \le 2\,\Prob(S^1_n=g_1\ \text{and}\ S^1_m=g_2\ \text{for some } n\le m<\tau^1_{2^{k+1}})\\
  &\quad \le 2\,\Prob(S^1_n=g_1\ \text{and}\ S^1_m=g_2\ \text{for some } n\le m)\\
  &\quad \le 2\,\sum_{m\ge n\ge 0} \Prob\bigl(S^1_n=g_1\bigr)\Prob_{g_1}\bigl(S^1_{m-n}=g_2\bigr)\\&\quad = 2\, \mathcal{G}(g_1) \mathcal{G}(g_1^{-1}g_2)
\end{aligned}\end{equation}
We can easily bound the Green function by using the bound $p_n(g)\lesssim n^{-2}e^{-c\rho(g)^2/n}$ provided by Proposition \ref{prop2.xx}. For $g\neq e$ this gives
  $$\mathcal{G}(g) \lesssim \sum_{n=1}^\infty n^{-2}e^{-c\rho(g)^2/n} \lesssim \int_1^\infty u^{-2}e^{-c\rho(g)^2/u}\,du \lesssim \rho(g)^{-2} \lesssim (1+\rho(g))^{-2},$$
where the last step was done so that we can also include $g=e$. Using this bound, from \eqref{eq:bound57} and \eqref{eq:niz} we obtain
\begin{align*}
  \E \left[|V_k^1\cap V_k^3|^2\right] &\lesssim \sum_{g_1,g_2\in\mathscr{A}_\GG(k)}2^{-4k}\bigl(1+\rho(g_1^{-1}g_2)\bigr)^{-4}\\
 &= \sum_{g_1\in\mathscr{A}_\GG(k)}2^{-4k}\sum_{g_2\in\mathscr{A}_\GG(k)}\bigl(1+\rho(g_1^{-1}g_2)\bigr)^{-4}.
\end{align*}
Since $|\mathscr{A}_\GG(k)| \asymp 2^{4k}$ and $\rho(g_1^{-1}g_2)<2^{k+1}$ whenever $g_1,g_2\in \mathscr{A}_\GG(k)$, this boils down to a sum we can easily deal with via dyadic decomposition:
\begin{align*}
  \E \left[|V_k^1\cap V_k^3|^2\right] &\lesssim \sum_{g\in\mathscr{B}_\GG(2^{k+1})}\bigl(1+\rho(g)\bigr)^{-4} \lesssim \sum_{\ell=0}^{k}\sum_{g\in\mathscr{A}_\GG(\ell+1)}\bigl(1+\rho(g)\bigr)^{-4}\\
&\lesssim \sum_{\ell=0}^{k} (2^{\ell+1})^4\bigl(1+2^\ell\bigr)^{-4} \lesssim k.\end{align*}
We are finally in a position to use a variant of the second moment method (see e.g. \cite[Lemma A.6.1]{Lawler-Limic-Book-2010})
\begin{align*}
  \Prob\bigl(E_k\bigr)&=\Prob\bigl( |V_k^1\cap V_k^3| >0 \bigr) =\lim_{u\to 0}\Prob\bigl(|V_k^1\cap V_k^3| \ge u\,\E\left[|V_k^1\cap V_k^3|\right]\bigr)\\
  &\ge\lim_{u\to 0}\frac{(1-u)^2 \bigl(\E\left[|V_k^1\cap V_k^3|\right]\bigr)^2}{\E\left[|V_k^1\cap V_k^3|^2\right]} \gtrsim \frac{1}{k},
\end{align*}
which proves (i). 

It remains to prove (ii). Assume $k<l$ and note that
\begin{equation}\label{eq:harn}
  \Prob\bigl(E_{4\ell}\vert E_{4k}\bigr) = \sum_{F}
\Prob\bigl(E_{4\ell}\vert E_{4k}\cap F\bigr)\,\Prob\bigl(F\vert E_{4k}\bigr),
\end{equation}
where sum goes over all possible trajectories of $\{S_n^3\}_{0\leq n\leq \tau^3_{2^{4k+1}}}$. Additionally, let $\calR_F$ be the set of all elements in $\GG$ visited by $\{S_n^3\}_{0\leq n\leq \tau^3_{2^{4k+1}}}$ that followed a fixed trajectory $F$, $h=S^3_{\tau^3_{2^{4k+1}}}$, and $H_g$ be the event in which $g$ is the first element of $\calR_F\cap V_{4k}^3$ visited by $\{S_n^1\}_{n\ge0}$. Then using Markov property we obtain
\begin{align*}
  \Prob\bigl(E_{4\ell}\vert E_{4k}\cap F\bigr) &= \sum_{g\in \calR_F\cap V_{4k}^3} \Prob\bigl(E_{4\ell}\vert H_g\cap E_{4k}\cap F\bigr)\, \Prob\bigl(H_g\vert E_{4k}\cap F\bigr)\\
  &= \sum_{g\in \calR_F\cap V_{4k}^3} \Prob_{(g,h)}\bigl(E_{4\ell})\, \Prob\bigl(H_g\vert E_{4k}\cap F\bigr).
\end{align*}
From Proposition \ref{lm:harnack} it follows that there is $c>0$ such that for all $\ell\in\N$ we have $\Prob_{(g_1,g_2)}(E_{4\ell})\le c\,\Prob_{(e,e)}(E_{4\ell})$ for all $(g_1,g_2)\in \overline{\mathscr{B}}_\GG(2^{4\ell-3})\times \overline{\mathscr{B}}_\GG(2^{4\ell-3})$.
Thus, from Markov property we can conclude that 
  $$\Prob\bigl(E_{4\ell}\vert E_{4k}\cap F\bigr) \leq c\,\Prob_{(e,e)}(E_{4\ell}),$$
and plugging this back into \eqref{eq:harn} we obtain
  $$\Prob\bigl(E_{4\ell}\vert E_{4k}\bigr)\le c\,\Prob(E_{4\ell}),$$
which proves (ii). 
\end{proof}

\begin{remark}
\begin{itemize}
    \item [(i)] We remark that in \cite[Corollary 1.4]{Asselah_Zd} a much more quantitative version of Proposition~\ref{Proposition 2.8} has been proven -- namely that for the symmetric simple random walk on $\ZZ^4$ one has $\E \left[\calC_n\right] \asymp n/\log n$. We conjecture, and leave as an open problem, that the same holds in the setting of finitely generated groups with symmetric set of generators and $d=4$.
    
    We believe there are couple of special cases of this conjecture that are interesting on its own. Most obviously, is $\E \left[\calC_n\right] \asymp n/\log n$ whichever symmetric set of generators of $\ZZ^4$ one considers?
    
    One could also consider the same question on a specific group with the growth index $4$ which does not contain $\ZZ^4$ as a finite index subgroup. An important example of such a group is the discrete Heisenberg group $H_3(\ZZ)$, i.e.\ the group of matrices of the form
      $$\begin{pmatrix}
         1 & a & c\\
         0 & 1 & b\\
         0 & 0 & 1\\
        \end{pmatrix}, \qquad a,b,c\in\ZZ.$$
    Is it true that $\E \left[\calC_n\right] \asymp n/\log n$ for the symmetric simple random walk on $H_3(\ZZ)$ with respect to the standard generators $\{x,y,x^{-1},y^{-1}\}$ where
      $$x = \begin{pmatrix}
         1 & 1 & 0\\
         0 & 1 & 0\\
         0 & 0 & 1\\
        \end{pmatrix}, \qquad
        y = \begin{pmatrix}
         1 & 0 & 0\\
         0 & 1 & 1\\
         0 & 0 & 1\\
        \end{pmatrix}?$$
    \item[(ii)] By completely the same reasoning as in Proposition \ref{Proposition 2.8} one can show that $
		\E \left[\calC_n\right] =\mathsf{o}(n)$ in the case when $d=3$. However, Proposition \ref{Proposition 2.6} gives much sharper result.
\end{itemize}
\end{remark}

 We finally have:

\begin{proof}[Proof of Theorem \ref{SLLN}]
  The assertions follow from Propositions \ref{Proposition 2.1}, \ref{Proposition 2.4}, \ref{Proposition 2.6} and \ref{Proposition 2.8} and relation \eqref{eqe}.
\end{proof}

\section{CLT for $\{\calC_n\}_{n\ge0}$}\label{S3}

In this section, we show how the proof of \cite[Theorem 1.1]{Asselah_Zd} can be adapted to prove Theorem \ref{CLT}. 
We first determine the constant $\sigma_d$ in \eqref{eq:CLT}. If $d\ge6$, we show that the sequence $\{\Var(\calC_n) / n\}_{n \ge 1}$ is convergent and its limit is exactly $\sigma_d^2$. 

We start with the following auxiliary result. For $n\ge0$,
we write $\mathcal{G}_n(g_1, g_2)$ for the Green function up to time $n$, i.e.
\begin{equation*}
  \mathcal{G}_n(g_1, g_2) = \sum_{k = 0}^n p_k(g_1,g_2).
\end{equation*}
\begin{proposition}\label{lm:bounds_on_GGG}
  Assume  $d \ge 5$. Then, for all $g \in \GG$,
  \begin{equation}\label{eq:bound_with_h_d}
	\sum_{g_1,g_2\in\GG} \mathcal{G}_n( g_1) \mathcal{G}_n( g_2) \mathcal{G}(g_1^{-1}gg_2) \lesssim \mathcal{E}(n),
  \end{equation}
  where 
    $$\mathcal{E}(n) = 
	\begin{cases}
	1, &\quad  d \ge 7,\\
	\log n, & \quad  d =6,\\
	\sqrt{n}, & \quad  d =5.\\
	\end{cases}$$
\end{proposition}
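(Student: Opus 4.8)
The plan is to expand all three Green functions into sums of transition probabilities and then to \emph{collapse} the resulting sum over $g_1,g_2\in\GG$ using the Chapman--Kolmogorov equations together with the symmetry $p_j(g)=p_j(g^{-1})$ and the left-translation-invariance of the kernel. Since every summand is nonnegative, Tonelli's theorem justifies all the rearrangements. Writing $\mathcal{G}_n(g_i)=\sum_{k=0}^n p_k(g_i)$ and $\mathcal{G}(g_1^{-1}gg_2)=\sum_{m\ge0}p_m(g_1^{-1}gg_2)$, the left-hand side of \eqref{eq:bound_with_h_d} becomes
\[
  \sum_{k,\ell=0}^n\ \sum_{m\ge0}\ \sum_{g_1,g_2\in\GG} p_k(g_1)\,p_\ell(g_2)\,p_m(g_1^{-1}gg_2).
\]

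The crucial step is to evaluate the inner sum over $g_1,g_2$ for fixed $k,\ell,m$. Summing first over $g_2$: by left-invariance $p_m(g_1^{-1}gg_2)=p_m(g^{-1}g_1,g_2)$, and by symmetry this equals $p_m(g_2,g^{-1}g_1)$, so Chapman--Kolmogorov gives $\sum_{g_2}p_\ell(g_2)p_m(g_1^{-1}gg_2)=p_{\ell+m}(g^{-1}g_1)$. Summing then over $g_1$, writing $p_{\ell+m}(g^{-1}g_1)=p_{\ell+m}(g_1,g)$ and $p_k(g_1)=p_k(e,g_1)$, a second application of Chapman--Kolmogorov yields $p_{k+\ell+m}(g)$. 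Hence the whole expression reduces to
\[
  \sum_{g_1,g_2\in\GG} \mathcal{G}_n(g_1)\mathcal{G}_n(g_2)\mathcal{G}(g_1^{-1}gg_2)
  =\sum_{k,\ell=0}^n\ \sum_{m\ge0} p_{k+\ell+m}(g)
  =\sum_{k,\ell=0}^n\ \sum_{j\ge k+\ell} p_j(g).
\]

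From here it is pure bookkeeping. Since $p_j(g)\le\sup_{h\in\GG}p_j(h)\lesssim j^{-d/2}$ by Proposition~\ref{prop2.x} (and $\sum_{j\ge0}p_j(g)\le\mathcal{G}(e)<\infty$ handles the term $j=0$), the tail obeys $\sum_{j\ge k+\ell}p_j(g)\lesssim (1+k+\ell)^{1-d/2}$ \emph{uniformly in} $g\in\GG$, which is exactly the uniformity required by the statement. It then remains to estimate $\sum_{k,\ell=0}^n(1+k+\ell)^{1-d/2}$. Grouping the pairs by their sum $s=k+\ell\in\{0,\dots,2n\}$, of which there are at most $s+1$, one gets a bound $\lesssim\sum_{s=0}^{2n}(1+s)^{2-d/2}$, and elementary summation produces $O(1)$ when $d\ge7$ (exponent $<-1$), $O(\log n)$ when $d=6$ (exponent $=-1$), and $O(\sqrt n)$ when $d=5$ (exponent $=-1/2$) --- precisely $\mathcal{E}(n)$.

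I expect the only genuinely non-routine point to be the Chapman--Kolmogorov collapse: this is what turns an expression whose $g$-dependence is hidden in the twisted argument $g_1^{-1}gg_2$ into the transparent tail $\sum_{j\ge k+\ell}p_j(g)$, whose uniform bound in $g$ then costs nothing. The remaining care is purely formal --- checking that the symmetry and left-invariance are applied on the correct side and that the interchange of the (infinite) sums is legitimate, both of which are immediate from nonnegativity. Notably, the Gaussian heat-kernel estimates of Proposition~\ref{prop2.xx} are not needed here; the crude on-diagonal bound of Proposition~\ref{prop2.x} already suffices.
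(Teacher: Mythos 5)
Your proof is correct and follows essentially the same route as the paper: the authors likewise collapse the double sum over $g_1,g_2$ via Chapman--Kolmogorov (using left-invariance and symmetry) to obtain $\sum_{i\ge0}p_{k+i+j}(g)$, bound this tail by $(j+k)^{-d/2+1}$ using Proposition~\ref{prop2.x}, and sum over $j,k\le n$. Your treatment of the $j=0$/$k+\ell=0$ edge case and the explicit grouping by $s=k+\ell$ are just slightly more detailed versions of the paper's ``elementary summations.''
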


\begin{proof}
  For all $k, j \geq 0$ we have that
  \begin{align*}
	\sum_{g_1,g_2\in\GG } 
	p_k(e, g_1) p_j(e, g_2)  \mathcal{G}(g_1, gg_2 ) &= \sum_{i = 0}^{\infty} \sum_{g_1,g_2\in\GG } p_k(e, g_1) p_j(g, gg_2 ) p_i(g_1, gg_2) \\
	& = \sum_{i = 0}^{\infty} \sum_{g_1,g_2\in\GG} p_k(e, g_1) p_i(g_1, gg_2) p_j(gg_2, g) \\
	&= \sum_{i = 0}^{\infty} p_{k + i + j}(g).
  \end{align*}
  Using Proposition \ref{prop2.x}, we thus get
  $$\sum_{g_1,g_2\in\GG } 
	p_k(e, g_1) p_j(e, g_2)  \mathcal{G}(g_1, gg_2 ) \lesssim (j+k)^{-d/2+1}.$$
  Summing over $j,k=0,\dots,n$ gives the stated bounds.
\end{proof}

We now conclude the announced first step of the proof of the CLT.

\begin{proposition}\label{prop3.x}
Assume $d\ge6$. Then,  $\{\Var(\calC_n)/n\}_{n\ge1}$ converges to a finite limit.  \end{proposition}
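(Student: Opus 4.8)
The plan is to show that $a_n := \Var(\calC_n)$ is \emph{additive up to an error that is $\mathsf{o}(n)$}, and then to invoke a Fekete-type lemma for almost-additive sequences. The geometric input is the splitting of the trajectory at a fixed intermediate time. Writing $A = \calR_m$ and $B = \calR[m,m+n]$, left-translation invariance and independence of the increments give that $\Capa(B)$ has the same law as $\calC_n$ and is independent of $\calC_m = \Capa(A)$. I introduce the defect
\[
  X_{m,n} := \Capa(A) + \Capa(B) - \Capa(\calR_{m+n}) \ge 0,
\]
which is nonnegative by subadditivity of capacity (\cite[Proposition 25.11]{Spitzer}). Expanding the variance and using $\operatorname{Cov}(\Capa(A),\Capa(B)) = 0$ gives
\[
  a_{m+n} = a_m + a_n + \Var(X_{m,n}) - 2\operatorname{Cov}\!\big(\Capa(A),X_{m,n}\big) - 2\operatorname{Cov}\!\big(\Capa(B),X_{m,n}\big).
\]

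The next step is to bound the moments of $X_{m,n}$. Using the last-exit representation $\Capa(A) = \sum_{x\in A}\Prob_x(\tau_A^+ = \infty)$, the defect is controlled by the mutual Green energy between the two halves, $X_{m,n} \lesssim \sum_{x\in A,\,y\in B}\mathcal{G}(x,y)$, because the loss in escape probability of a point of $A$ caused by adjoining $B$ is at most its probability of ever hitting $B$, namely $e_A(x)-e_{A\cup B}(x)=\Prob_x(\tau_A^+=\infty,\,\tau_B<\infty)\le\Prob_x(\tau_B<\infty)\le\sum_{y\in B}\mathcal{G}(x,y)$, and symmetrically. Taking expectations and writing $x = S_i$, $y = S_j$ with $i\le m \le j$, the element $S_i^{-1}S_j$ splits as a product of two independent increments; after summing over $i,j$ this is exactly the energy controlled by Proposition \ref{lm:bounds_on_GGG} (with $g=e$), so $\E[X_{m,n}] \lesssim \calE(m+n)$. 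The crucial estimate is the second moment $\E[X_{m,n}^2]$: expanding the square produces a four-point Green-function sum, which after ordering the times and repeatedly applying the Markov property reduces to convolutions of Green functions that are again dominated by Proposition \ref{lm:bounds_on_GGG}, yielding $\E[X_{m,n}^2] \lesssim \calE(m+n)^2$.

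Given these bounds, the error terms are controlled by Cauchy--Schwarz together with the a priori bound $a_n \lesssim n$ (which itself follows from a Doob martingale decomposition of $\calC_n - \E[\calC_n]$ whose increments have uniformly bounded second moments, again via Proposition \ref{lm:bounds_on_GGG}): indeed $|\operatorname{Cov}(\Capa(A),X_{m,n})| \le \sqrt{a_m\,\E[X_{m,n}^2]} \lesssim \sqrt{m}\,\calE(m+n)$, and likewise for the other covariance. Hence
\[
  \big|a_{m+n} - a_m - a_n\big| \lesssim \sqrt{m+n}\,\calE(m+n) + \calE(m+n)^2.
\]
For $d\ge6$ we have $\calE(n) \in \{1,\log n\}$, so the right-hand side is $\mathsf{o}(m+n)$; a standard almost-additivity (de Bruijn--Erd\H{o}s / Hammersley) lemma then shows that $a_n/n = \Var(\calC_n)/n$ converges to a finite limit.

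I expect the main obstacle to be the four-point second-moment estimate $\E[X_{m,n}^2]\lesssim\calE(m+n)^2$: this is precisely the computation that breaks down at $d=5$ (where $\calE(n)=\sqrt{n}$ and the resulting error is no longer $\mathsf{o}(n)$, matching the anomalous $\sqrt{n\log n}$ scaling there), and carrying it out rigorously requires the same careful decoupling of the interaction between the two halves of the path as in \cite{Asselah_Zd}.
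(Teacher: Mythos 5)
Your proposal is correct and follows essentially the same route as the paper: split the trajectory at an intermediate time, bound the subadditivity defect by the mutual Green energy $\mathcal{G}(\calR^{(1)}_n,\calR^{(2)}_{n,m})$, control its moments by $\mathcal{E}(n)^q$ via Proposition \ref{lm:bounds_on_GGG} (the paper defers the $q$-th moment estimate to \cite[Lemma 3.2]{CSS19}, just as you defer the four-point computation to \cite{Asselah_Zd}), and conclude with the de Bruijn--Erd\H{o}s almost-subadditivity lemma; your direct expansion of $\Var\bigl(\Capa(A)+\Capa(B)-X_{m,n}\bigr)$ is a rephrasing of the paper's $L^2$ triangle-inequality computation. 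The one substantive deviation is the a priori bound: you propose a Doob martingale with ``uniformly bounded'' increment second moments to get $\Var(\calC_n)\lesssim n$, whereas the paper runs a dyadic recursion yielding $\Vert\overline{\calC}_n\Vert_2\lesssim\sqrt{n}\,\mathcal{E}(n)$; note that at $d=6$ your martingale increments are only $O(\log^2 n)$, not $O(1)$, so this argument delivers $\Var(\calC_n)\lesssim n\log^2 n$ rather than $\lesssim n$ --- harmless, since the resulting error $\sqrt{m+n}\,\mathcal{E}(m+n)^2$ is still dominated by a nondecreasing $\varphi$ with $\sum_n\varphi(n)/n^2<\infty$, which is all de Bruijn--Erd\H{o}s needs.
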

\begin{proof} The proof proceeds analogously as for the symmetric simple random walk on $\ZZ^d$, $d\ge6$, given in \cite{Asselah_Zd}. 
For the reader's convenience we give a complete proof. For $n,m\in\N$, let  $\calR_n^{(1)}=\{S_n^{-1}S_0,\dots,S_n^{-1}S_n\}$, $\calR_{n,m}^{(2)}=\{S_{n}^{-1}S_n,\dots,S_n^{-1}S_{n+m}\}$, $\calC_n^{(1)}=\mathrm{Cap}(\calR_n^{(1)})$ and $\calC_{n,m}^{(2)}=\mathrm{Cap}(\calR_{n,m}^{(2)})$. Clearly, $\calR_n^{(1)}$ and $\calR_{n,m}^{(2)}$ ($\calC_n^{(1)}$ and $\calC_{n,m}^{(2)}$) are independent and have the same law as $\calR_n$ and $\calR_m$ ($\calC_n$ and $\calC_m$), respectively.
According to 
\begin{itemize}
    \item [(i)] capacity decomposition shown in \cite[Proposition 25.11]{Spitzer} and \cite[Proposition 1.2]{Asselah_Zd}:
	for any finite sets $A,B\subset\GG$, $$\mathrm{Cap}(A)+\mathrm{Cap}(B)-2\mathcal{G}(A,B)\le\mathrm{Cap}(A\cup B)\le \mathrm{Cap}(A)+\mathrm{Cap}(B)-\mathrm{Cap}(A\cap B);$$
    \item [(ii)] left-translation-invariance of the capacity: for any $n,m\in\N$, $$\calC_{n+m}=\mathrm{Cap}\bigl(S_n^{-1}\calR_{n+m}\bigr)=\mathrm{Cap}\bigl(\calR_n^{(1)}\cup\calR_{n,m}^{(2)}\bigr),$$ 
\end{itemize}
we conclude
\begin{equation}\label{decomp}
\calC_n^{(1)}+\calC_{n,m}^{(2)}-2\mathcal{G}(\calR_n^{(1)},\calR_{n,m}^{(2)})\le \calC_{n+m}\le \calC_n^{(1)}+\calC_{n,m}^{(2)}.\end{equation} 
We remark that both \cite[Proposition 25.11]{Spitzer} and \cite[Proposition 1.2]{Asselah_Zd} are stated only for the symmetric simple random walk on $\ZZ^d$, but the proofs are valid for any finitely generated group with symmetric set of generators.

Further,  for $n,m\in\N$ define	 $\overline{\calC}_n = \calC_n - \E[\calC_n]$, and similarly  $\overline{\calC}_n^{(1)}$ and     $\overline{\calC}_{n,m}^{(2)}$. Taking expectation in \eqref{decomp} and then subtracting those two relations yields
	\begin{equation*}
	\big\vert \overline{\calC}_{n+m} - (\overline{\calC}_n^{(1)} + \overline{\calC}_{n,m}^{(2)})\big\vert \le 2 \max \{\mathcal{G}(\calR_n^{(1)},\calR_{n,m}^{(2)}), \E[\mathcal{G}(\calR_n^{(1)},\calR_{n,m}^{(2)})]\}.
	\end{equation*}
	Proceeding in a completely analogous way as in \cite[Lemma 3.2]{CSS19} (and using \eqref{eq:bound_with_h_d}) we obtain 
\begin{equation}\label{eq:err}
	\E\left[\big(\mathcal{G}(\calR^{(1)}_n, \calR^{(2)}_{n,n})\big)^q\right] \lesssim_q \mathcal{E}(n)^q
\end{equation} 
    for every $q \in \bbN$. Recall the definition of the function $\mathcal{E}(n)$ from Proposition \ref{lm:bounds_on_GGG}.
		Denote  $\lVert\cdot\rVert_2=\E[(\cdot)^2]^{1/2}$. Clearly, $\Var(\calC_n)=\lVert\overline{\calC}_n\rVert_2^2$ for $n\in\N$.
The triangle inequality and independence of $\overline{\calC}_n^{(1)}$ and $\overline{\calC}_{n,m}^{(2)}$ together with  the estimate $\E[\mathcal{G}(\calR_n^{(1)},\calR_{n,m}^{(2)})] \le \Vert \mathcal{G}(\calR_n^{(1)},\calR_{n,m}^{(2)})\Vert_2$ and \eqref{eq:err} imply
	\begin{align*}
	\Vert\overline{\calC}_{n+m}\Vert_2
	& \le \big(\Vert\overline{\calC}_n^{(1)}\Vert_2^2 + \Vert\overline{\calC}_{n,m}^{(2)}\Vert_2^2\big)^{1/2} + 4\Vert \mathcal{G}(\calR_n^{(1)},\calR_{n,m}^{(2)})\Vert_2 \\
	&\leq \big(\Vert\overline{\calC}_n^{(1)}\Vert_2^2 + \Vert\overline{\calC}_{n,m}^{(2)}\Vert_2^2\big)^{1/2} + c\mathcal{E}(n+m)
	\end{align*} for some $c>0$, where
	in the last inequality we used $$\mathcal{G}(\calR_n^{(1)},\calR_{n,m}^{(2)})\le \mathcal{G}(\calR_{n+m}^{(1)},\calR_{n+m,n+m}^{(2)}).$$ 
	Consequently,  
	\begin{align*}
	\Vert\overline{\calC}_{n + m}\Vert_2^2
	\le \Vert\overline{\calC}_n\Vert_2^2 + \Vert\overline{\calC}_m\Vert_2^2 + 2c \big(\Vert\overline{\calC}_n\Vert_2^2 + \Vert\overline{\calC}_m\Vert_2^2\big)^{1/2} \mathcal{E}(n+m) + c^2\mathcal{E}(n+m)^2. 
	\end{align*}
	By setting $g(n)=\Vert\overline{\calC}_{n}\Vert_2^2$ for $n\in\N$,  
	the above relation reads 
	\begin{equation}\label{to show_1}
g(n+m)\le g(n)+g(m)+2c \big(\Vert\overline{\calC}_n\Vert_2^2 + \Vert\overline{\calC}_m\Vert_2^2\big)^{1/2} \mathcal{E}(n+m) + c^2\mathcal{E}(n+m)^2.
		\end{equation}
		In the sequel we find an upper bound for the third term of the right hand side of inequality \eqref{to show_1}.  If we  prove that
\begin{equation}\label{eqc}
\Vert \overline{\calC}_n\Vert_2\lesssim\sqrt{n}\,\mathcal{E}(n),
\end{equation}	
then by defining $\varphi(n)\asymp\sqrt{n}\mathcal{E}(n)^2$ the assertion of the lemma will follow directly from \eqref{to show_1}, the fact that $d\ge6$ and de Brujin-Erd\H{o}s extension of Fekete's lemma (see \cite[Theorem 23]{Brujin}).

Let us prove \eqref{eqc}. For  $k \in\N$  we set
	\begin{equation}\label{eq:def_of_a_k}
	\gamma(k) = \sup \left\{ \Vert \overline{\calC}_i\Vert_2:\, 2^k \le i \le 2^{k + 1}\right\}.
	\end{equation}
	Further, for $k \ge 2$ we take $n\in\N$ such that $2^k \le n < 2^{k + 1}$, and we set $l = \floor{n/2}$ and $m = n - l$. 
	Analogously as above we have
	\begin{equation*}\label{eq:Cor_2.1.-existance_of_limit}
	\calC_l^{(1)} + \calC_{l,m}^{(2)}  -2G(\calR_l^{(1)},\calR_{l,m}^{(2)}) \le \calC_n \le \calC_l^{(1)} + \calC_{l,m}^{(2)},
	\end{equation*}
	and we arrive at
	\begin{align}\label{eq:inter}
	\Vert\overline{\calC}_n\Vert_2
	&  \le \big(\Vert\overline{\calC}_l^{(1)}\Vert_2^2 + \Vert\overline{\calC}_{l,m}^{(2)}\Vert_2^2\big)^{1/2} + c \mathcal{E}(n)
	\end{align} for some $c>0$.
	 Recall that $\calR_l^{(1)}$ and $\calR_{l,m}^{(2)}$ ($\calC_l^{(1)}$ and $\calC_{l,m}^{(2)}$) are independent and have the same law as $\calR_l$ and $\calR_m$ ($\calC_l$ and $\calC_m$), respectively.
	Hence, equation \eqref{eq:inter} implies
	\begin{equation*}
	\Vert\overline{\calC}_n\Vert_2 \le \sqrt{2} \gamma(k-1) + c \mathcal{E}(n).
	\end{equation*}
	Taking supremum over $2^k \le n \le 2^{k + 1}$ yields	\begin{equation*}\label{eq:rec_relation_for_a_k}
	\gamma(k) \le \sqrt{2} \gamma(k-1) + c \mathcal{E}(2^{k + 1}).
	\end{equation*}
	We next set $\delta(k) = \gamma(k) / \mathcal{E}(2^k)$.  Due to monotonicity of $\mathcal{E}(n)$ and the fact that $\mathcal{E}(2^{k+1})\le2 \mathcal{E}(2^k)$ for all $n\in\N$ (recall the definition of $\mathcal{E}(n)$) it holds that
	\begin{equation*}
	\delta(k) \le \sqrt{2} \delta(k-1) + 2c.
	\end{equation*}
	By iteration of this inequality we have $\delta(k) \lesssim 2^{k/2}$ which  implies $\gamma(k) \lesssim 2^{k/2} \mathcal{E}(2^k)$. Finally, using definition of $\gamma(k)$  we obtain
$$
\lVert\overline\calC_n\rVert_2^2 \le \gamma(k)^2 \lesssim  2^k \mathcal{E}(2^k)^2 \lesssim  n\, \mathcal{E}(n)^2,
	$$
which is \eqref{eqc}.
\end{proof}


Denote the limit of $\{\Var(\calC_n)/n\}_{n\ge1}$ by $\sigma_d^2$. Strict positivity of  $\sigma_d^2$ now follows  from (a straightforward adaptation of) \cite[Section 3.3]{Asselah_Zd} where $\{S_n\}_{n\ge0}$ is decomposed into two independent processes. The first process is the process counting double-backtracks at even times and it is given as a sequence of i.i.d.\ geometric random variables $\{\xi_i\}_{i \in2\N}$  with parameter $p=1 - 1/|\Gamma|^2$
 (we say that a random walk $\{\widetilde S_n\}_{n\ge0}$ on $\GG$ makes a double backtrack at even time $n\in2\N$ if $\widetilde S_{n - 1} = \widetilde S_{n - 3}$ and $\widetilde S_n = \widetilde S_{n - 2}$). The second process is a random walk $\{\hat S_n\}_{n\ge0}$ on $\GG$
with no double-backtracks at even times constructed as follows: set $\hat{S}_0 = 0$ and then choose $\hat{S}_1$ and $\hat{S}_2$ in the same fashion as one would choose them in a symmetric random walk on $\GG$. Suppose that we have constructed $\{\hat S_n\}_{n\ge0}$ for all times $k \le 2n$. We now let $(\hat{S}_{2n + 1}, \hat{S}_{2n + 2})$ be uniform in the set
\begin{equation*}
    \{(g_1, g_2) \in \GG \times \GG : \rho(g_1, g_2) = 1,\, \rho(\hat{S}_{2n}, g_1) = 1\, \textnormal{ and }\, (g_1, g_2) \neq (\hat{S}_{2n - 1}, \hat{S}_{2n}) \}.
\end{equation*} Further, setting $N_0=N_1=0$ and
\begin{equation*}
    N_k = \sum_{\substack{i = 2 \\ i \in2\N}}^k \xi_i
\end{equation*}
for $k\ge2$, we construct $\{\tilde S_n\}_{n\ge0}$ from $\{\hat S_n\}_{n\ge0}$ as follows. First we set $\tilde S_i = \hat{S}_i$ for  $i=0,1,2$, and for all $k \in\N$ we set $I_k = [2k + 2N_{2(k - 1)} + 1, 2k + 2N_{2k}]$. If $I_k \neq \emptyset$, then if $i \in I_k$ is odd, we set $\tilde S_i = \hat{S}_{2k - 1}$, while if $i$ is even, we set $\tilde S_i = \hat{S}_{2k}$. Afterwards, for the next two steps, we follow the path of $\{\hat S_n\}_{n\ge0}$, i.e.
\begin{equation*}
     \tilde S_{2k + 2N_{2k} + 1} = \hat{S}_{2k + 1} \qquad \textnormal{and}\qquad \tilde S_{2k + 2N_{2k} + 2} = \hat{S}_{2k + 2}.
\end{equation*}
It is now immediate that $\{\tilde S_n\}_{n\ge0}$ is a symmetric random walk on $\GG$, i.e.\ identical in law to $\{S_n\}_{n\ge0}$.
Strict positivity of $\sigma_d^2$ now follows by performing an analogous analysis as in \cite[Lemma 3.3]{Asselah_Zd}.

 We finally have:

\begin{proof}[Proof of Theorem \ref{CLT}]
The proof proceeds analogously as for \cite[Theorem 1.1]{Asselah_Zd}. We explain the main steps of the proof only. The proof is based on the following three results:
\begin{itemize}
\item [(a)] dyadic capacity decomposition formula (generalisation of formula \eqref{decomp}) derived in \cite[Corollary 2.1]{Asselah_Zd}:  Let $L,n\ge1$ be such that $2^L\le n$.  Then, \begin{equation}\label{eq:dyadic}\sum_{i=1}^{2^L}\mathrm{Cap}\,(\calR^{(i)}_{n/2^L})-2\sum_{l=1}^L\sum_{i=1}^{2^{l-1}}\mathrm{Er}_l^{(i)}
		\le \calC_n\le \sum_{i=1}^{2^L}\mathrm{Cap}\,(\calR^{(i)}_{n/2^L}),\end{equation} where $\{\calR^{(i)}_{n/2^L}\}_{i=1,\dots,2^L}$ are independent and  $\calR^{(i)}_{n/2^L}$ has the same law as $\calR_{\floor{n/2^L}}$ or $\calR_{\floor{n/2^L+1}}$, and for each $l=1,\dots,2^L$ the random variables $\{\mathrm{Er}_l^{(i)}\}_{i=1,\dots,2^{l-1}}$
		are independent and $\mathrm{Er}_l^{(i)}$ has the same law as $\mathcal{G}(\calR^{(i)}_{n/2^l},\bar\calR^{(i)}_{n/2^l})$ with  $\{\bar\calR_n\}_{n\ge0}$ being an independent copy of $\{\calR_n\}_{n\ge0}$.

			\medskip
			
			\item[(b)] fourth moment estimate of the capacity derived in \cite[Lemma 4.2]{Asselah_Zd}: If $d\ge6$, then
\begin{equation}\label{eq_4}	\E\big[  \overline{\calC}_n^4 \big] \lesssim n^2,
	\end{equation} where again $\overline\calC_n=\calC_n-\E[\calC_n].$
			
			\medskip
		
		\item[(c)]  Lindeberg-Feller central limit theorem (see \cite[Theorem 4.5]{Durrett}): For  $n\in\N$ let $\{X_{n,i}\}_{1\le i\le n}$ be a sequence of independent random variables. If
	\begin{itemize}
		\item [(i)] $\displaystyle\lim_{n\to\infty}\sum_{i=1}^n\Var(X_{n,i})=\sigma^2>0;$

		\item [(ii)] for every $\varepsilon>0$, $\displaystyle\lim_{n\to\infty}\sum_{i=1}^n\E\left[(X_{n,i}-\E[X_{n,i}])^2\bbjedan_{\{|X_{n,i}-\E[X_{n,i}|>\varepsilon\}}\right]=0,$
	\end{itemize}
	then $X_{n,1}+\cdots+ X_{n,n}\xrightarrow[n\to\infty]{\rm{(d)}}\calN(0,\sigma^2).$ 
\end{itemize}
We remark that both \cite[Corollary 2.1 and Lemma 4.2]{Asselah_Zd} are stated only for the symmetric simple random walk on $\ZZ^d$, but the proofs are valid for any finitely generated group with symmetric set of generators.

Denote $\calC^{(i)}_{n/2^L}= \mathrm{Cap}\,(\calR^{(i)}_{n/2^L})$ for $i=1,\dots,2^L$.
	By taking expectation in \eqref{eq:dyadic} and then subtracting those two relations we obtain	
	\begin{equation}\label{eq:relation_from_Cor_2.1-with_bars}
	\sum_{i = 1}^{2^L} \overline\calC^{(i)}_{n/2^L} - 2 \sum_{l = 1}^L \sum_{i = 1}^{2^{l - 1}} \mathrm{Er}_l^{(i)} \le \overline{\calC}_n \le \sum_{i = 1}^{2^L} \overline\calC^{(i)}_{n/2^L} + 2 \sum_{l = 1}^L \sum_{i = 1}^{2^{l - 1}} \E[\mathrm{Er}_l^{(i)}],
	\end{equation} where $\overline\calC^{(i)}_{n/2^L}=\calC^{(i)}_{n/2^L}-\E[\overline\calC^{(i)}_{n/2^L}]$ for $i=1,\dots,2^L$.
	For $n\in\N$, define
	\begin{equation*}
	\mathrm{Er}(n) = \sum_{i = 1}^{2^L} \overline\calC^{(i)}_{n/2^L} - \overline{\calC}_n.
	\end{equation*}
	Using \eqref{eq:relation_from_Cor_2.1-with_bars} and \eqref{eq:err}, we get that
	\begin{equation*}
	\E\big[ \aps{\mathrm{Er}(n)} \big] \le 4 \E\ugl{\sum_{l = 1}^L \sum_{i = 1}^{2^{l - 1}} \mathrm{Er}_l^{(i)}} \lesssim\sum_{l = 1}^L \sum_{i = 1}^{2^{l - 1}} \mathcal{E} \obl{\frac{n}{2^l}} \le \mathcal{E}(n) \sum_{l = 1}^L 2^{l-1} \le 2^{L} \log(n)
	\end{equation*} (recall the definition of the function $\mathcal{E}(n)$ from Proposition \ref{lm:bounds_on_GGG}).
Set now $L =L(n)=\lfloor\log_2\big(n^{1/4} \big)\rfloor$. Then, 
	$$
	\lim_{n\to \infty} \frac{\E\big[\aps{\mathrm{Er}(n)}\big] }{\sqrt{n} } =0.
$$	We are thus left to prove that
	\begin{equation*}
	\sum_{i = 1}^{2^L} \frac{\overline\calC^{(i)}_{n/2^L}}{\sqrt{n}}\xrightarrow[n\to\infty]{\text{(d)}}   \calN(0, \sigma_d^2).
	\end{equation*} 
	To establish this result we apply the Lindeberg-Feller central limit theorem. For $n\in\N$ and $1\le i\le 2^L$ put $X_{2^L,i}=\overline\calC^{(i)}_{n/2^L}/\sqrt{n}.$
	By Proposition \ref{prop3.x} we have
	\begin{equation*}
\lim_{n\to \infty }\sum_{i = 1}^{2^L}\Var(X_{2^L,i})=	\lim_{n\to \infty }\sum_{i = 1}^{2^L} \frac{1}{n} \Var(\overline\calC^{(i)}_{n/2^L}) = \sigma_d^2,
	\end{equation*}
	which means that the first Lindeberg-Feller condition is satisfied. 
	It remains to check that for any $\varepsilon >0$ it holds that
	\begin{equation*}\lim_{n\to\infty}\sum_{i=1}^{2^L}\E\left[X_{2^L,i}^2\bbjedan_{\{|X_{2^L,i}|>\varepsilon\}}\right]=
	\lim_{n \to \infty} \sum_{i = 1}^{2^L} \frac{1}{n} \E\left[ \left(\overline\calC^{(i)}_{n/2^L}\right)^2 \bbjedan_{\{ \aps{\overline\calC^{(i)}_{n/2^L}} > \varepsilon \sqrt{n} \}}\right] = 0.
	\end{equation*}
	Observe that by the Cauchy-Schwartz inequality we have 
	\begin{align*}
	\E\left[ \left(\overline\calC^{(i)}_{n/2^L}\right)^2 \bbjedan_{\{ \aps{\overline\calC^{(i)}_{n/2^L}} > \varepsilon \sqrt{n} \}}\right]
	&\le \left(\E\left[\left(\overline\calC^{(i)}_{n/2^L}\right)^4\right] \Prob(\aps{\overline\calC^{(i)}_{n/2^L}} > \varepsilon \sqrt{n})\right)^{1/2}.
	\end{align*}
Further, the Chebyshev inequality combined with \eqref{eq_4}, strict positivity of $\sigma_d$ and Proposition \ref{prop3.x}, imply
	\begin{align*}
\E\left[\left(\overline\calC^{(i)}_{n/2^L}\right)^4\right] \Prob(\aps{\overline\calC^{(i)}_{n/2^L}} > \varepsilon \sqrt{n})
	 \lesssim\obl{\frac{n}{2^L}}^2 \, \frac{\Var\left(\overline\calC^{(i)}_{n/2^L}\right)}{\varepsilon^2 n} 
	 \lesssim 2\sigma_d^2 \frac{n^2}{\varepsilon^2 2^{3L}} .
	\end{align*}
	Based on the choice  $L =\lfloor\log_2\big(n^{1/4} \big)\rfloor $,  we conclude 
	\begin{equation*}
	\sum_{i = 1}^{2^L} \frac{1}{n} \E\left[ \left(\overline\calC^{(i)}_{n/2^L}\right)^2 \bbjedan_{\{ \aps{\overline\calC^{(i)}_{n/2^L}} > \varepsilon \sqrt{n} \}}\right] \lesssim n^{-1/8},
	\end{equation*}
and this finishes the proof.

\end{proof}

\begin{remark}
    Having the CLT at our disposal, under the same assumptions we can also obtain a functional version of this result. It is straightforward to reproduce the arguments from  \cite[Theorem 1.1]{CSS-FCLT} (by checking the Aldous' tightness condition and showing  convergence of finite dimensional distributions in \eqref{eq:fclt}) and conclude the following
    \begin{equation}\label{eq:fclt}
        \VIT{\frac{\calC_{\floor{nt}} - \E[\calC_{\floor{nt}}]}{\sigma_d \sqrt{n}}}_{t \ge 0} \xrightarrow[n \to \infty]{(\textnormal{J}_1)}\, \{B_t\}_{t \ge 0}\,,
    \end{equation}
    where $\sigma_d$ is the constant from Theorem \ref{CLT}, $\xrightarrow[n \to \infty]{(\textnormal{J}_1)}$ stands for the weak convergence in  the Skorohod space $\calD([0, \infty), \bbR)$ endowed with the $\textnormal{J}_1$ topology, and $\{B_t\}_{t \ge 0}$ denotes a standard one-dimensional Brownian motion.
\end{remark}

\section*{Acknowledgement} 
\noindent
We would like to thank an anonymous referee for numerous valuable comments. This work has been supported by \textit{Deutscher Akademischer} \textit{Austauschdienst} (DAAD) and \textit{Ministry of Science and Education of the Republic of Croatia} (MSE) via project \textit{Random Time-Change and Jump Processes}.  
Financial support through the  \textit{Croatian Science Foundation} under project 4129 (for R.\ Mrazovi\'{c}), 
\textit{Alexander-von-Humboldt Foundation} under project No.\ HRV 1151902 HFST-E and \textit{Croatian Science Foundation} under project 8958 (for N.\ Sandri\'c), and \textit{Croatian Science Foundation} under project 4197 (for S.\ \v Sebek) is gratefully acknowledged.


\bibliographystyle{babamspl}
\bibliography{Capacity_on_groups}

\end{document}